\documentclass[11pt]{amsart}

\usepackage[utf8]{inputenc}

\usepackage{amssymb}

\allowdisplaybreaks[2]

\newtheorem{theorem}{Theorem}[section]

\newtheorem{lemma}[theorem]{Lemma}
\newtheorem{corollary}[theorem]{Corollary}

\theoremstyle{definition}

\newtheorem{example}[theorem]{Example}
\numberwithin{equation}{section}

%%%%%%%%%%% For IMPAN journals:

%\frenchspacing

\textwidth=13.5cm
\textheight=23cm

\begin{document}

\author{Matej Brešar,   Kaiming Zhao}
\address{M. B.: Faculty of Mathematics and Physics,  University of Ljubljana,  and Faculty of Natural Sciences and Mathematics, University
of Maribor, Slovenia}
\email{matej.bresar@fmf.uni-lj.si}
\address{K. Z.: Department of Mathematics, Wilfrid Laurier University, Waterloo,  Canada, and   College of Mathematics and Information Science, Hebei Normal  University, Shijiazhuang,  P. R. China }
\email{kzhao@wlu.ca}

\thanks{\emph{Mathematics Subject Classification}. 17B05, 17B40, 16R60.}
\keywords{Lie algebra, biderivation, commuting linear map, centroid.}

\thanks{The first named author was supported by ARRS Grant P1-0288. The second named author is partially supported by NSF of China (Grant No. 11271109) and NSERC}

\title{Biderivations and commuting linear maps on Lie algebras}

\begin{abstract}
Let $L$ be a Lie algebra over a field of characteristic different from $2$. If $L$ is perfect and centerless, then every skew-symmetric biderivation $\delta:L\times L\to L$  is of the form $\delta(x,y)=\gamma([x,y])$ for all $x,y\in L$, where $\gamma\in{\rm Cent}(L)$,  the centroid of $L$. Under a milder assumption that $[c,[L,L]]=\{0\}$ implies $c=0$, every commuting linear map from $L$ to $L$ lies in ${\rm Cent}(L)$. 
These two results are special cases of our main theorems which concern biderivations and commuting linear maps having their ranges in an $L$-module.
We provide a variety of examples, some of them showing the necessity of our 
assumptions and some of them showing that our results cover several results from the literature.
\end{abstract}
\maketitle

\section{Introduction}

Let $L$ be a Lie algebra over a field $F$
and let $M$ be a module over $L$. Recall that a linear map $d:L\to M$ is a {\it derivation} if 
$$d([x,y])=x\cdot d(y)-y\cdot d(x)\,\,\,\mbox{for all $x,y\in L$.}$$ 
%If $M=L$ and $x\cdot y$ is the usual product $[x,y]$, we get the standard definition of a derivation from $L$ to $L$.  
We will say that a bilinear map $\delta:L\times L\to M$ is a {\em skew-symmetric biderivation} if $\delta(x,y) = -\delta(y,x)$ for all $x,y\in L$ and 
$$\delta([x,y],z)=x\cdot \delta(y,z)-y\cdot \delta(x,z)\,\,\,\mbox{for all $x,y,z\in L$.}$$ 
That is, $x\mapsto \delta(x,z)$ is a derivation for every $z\in L$ (and hence, since $\delta$ is skew-symmetric,  $x\mapsto \delta(z,x)$ is also a derivation). Next, we will say that a linear map $f:L\to M$ is a {\em commuting linear map} if 
%we define that a linear map $f:L\to M$ is a {\em commuting linear map} if 
$x\cdot f(x) = 0$ for all $x\in L$.  This condition readily implies that $x\cdot f(y) =  - y\cdot f(x)$ for all $x,y\in L$, which shows that $\delta(x,y)= x\cdot f(y) $ is a skew-symmetric biderivation. Thus, skew-symmetric biderivations may be viewed as a generalization of commuting linear maps.

If $M=L$ and $x\cdot y=[x,y]$, the above definitions coincide with the usual ones. Generalizations involving modules, which we propose, are, on the one hand, interesting in their own right, and, on the other hand,   suit the methods that we will employ.

The study of commuting maps and (skew-symmetric) biderivations has its roots in associative ring theory \cite{Bre1993, BMM}, where it has turned out to be  influential and far-reaching -- see
\cite{Bre3} and \cite{FIbook}. An interest in studying these maps on Lie algebras has grown more recently \cite{Chen2016, LL, GLZ, Hanw, tang2016, WD1, WD2}.
We will cover most of the   results from these papers by using a general but simple approach. The following notion will be of crucial importance: we define  the {\em centroid of $M$}, and denote it by Cent$(M)$,
as the space  of all
$L$-module homomorphisms from $L$ to $M$, where $L$ is viewed as an $L$-module under the adjoint action.
Thus, a linear map $\gamma:L\to M$ belongs to Cent$(M)$ if
$$\gamma([x,y]) = x\cdot \gamma(y)\quad\mbox{for all $x,y\in L$}.$$
If  $M=L$, this is the usual centroid of $L$, which can be often computed by using known  results  (e.g., \cite{BN} and \cite{D}).
%Using some known results, e.g.,  \cite{BN} and \cite{D}, Cent$(L)$
 %It is generally not hard to compute the centroid Cent$(L)$ using  some known  results, for example, \cite{BN} and \cite{D}.
The connection between the centroid and the theme of this paper is straightforward: if $\gamma\in {\rm Cent}(M)$, then $\gamma$ is a 
commuting linear map, and, moreover, 
 the map $\delta(x,y)=\gamma([x,y])$ is a skew-symmetric biderivation. Under appropriate assumptions, we will show that there are no other commuting linear maps and skew-symmetric biderivations than these.
 To describe our results  more specifically,  
we need some further notation and conventions.
For any subset $S$ of $L$, we set
$$Z_M(S)=\{v \in M\,|\, S\cdot v=\{0\}\}.$$
 Note that $Z=Z_L(L)$ is just the center of $L$. As usual, we write $L'$ for $[L,L]$.
 Throughout the paper, we assume that all our Lie algebras are over a field  $F$ such that
$$\mbox{char$(F)\ne 2$.}$$
 This assumption will not be repeated in the statements 
of the results.

In principle, a description of skew-symmetric biderivations from $L\times L$ to $M$ implies a description of commuting linear maps from $L$ to $M$. However, we will describe commuting linear maps under milder assumptions, so we will treat them separately to some extent. 
Sect.\,2 is devoted to biderivations, and Sect.\,3 to commuting linear maps. 

 In Sect.\,2, we first obtain a general formula for  skew-symmetric biderivations from $L\times L$ to $M$ (Lemma \ref{bl}).
Then we derive our main result, Theorem \ref{mt}, stating that every biderivation arises from the centroid (as above) provided that $L$ is perfect and 
 $Z_M(L)=\{0\}$.  We also provide an algorithm for describing   skew-symmetric biderivations, and show 
that a number of known results can be deduced from Theorem \ref{mt}.
At the end of the section, we show by an example that 
restricting ourselves to biderivations that are skew-symmetric is really necessary.

The main result of Sect.\,3, Theorem \ref{c1}, states that already $Z_M(L')=\{0\}$ implies that 
 every commuting linear map from $L$ to $M$ belongs to ${\rm Cent}(M)$. 
%which generalizes Theorem 5,28 in \cite{LL}. 
An algorithm for describing commuting linear maps on $L$ is also provided and several examples  given.

%In Sect. 4, we provide an example on symmetric biderivations $L\to M$, where $M$ is an $L$-module, to show that the situation is very different from skew-symmetric case. We conclude the paper with an open question on symmetric biderivations.

We conclude the introduction by a remark on the possible meaning of our results in a wider context.
It is a fact that the description of additive commuting maps on prime rings \cite{Bre1993} eventually led to the theory of functional identities on noncommutative rings \cite{FIbook}. Linear commuting maps on Lie algebras can therefore be viewed as a  testing case for developing the theory of functional identities on Lie algebras. So far, with a partial exception \cite{LL} (where, however, only finite dimensional Lie algebras were treated), their description was known only for some special examples of Lie algebras. The fact that the present paper contains a description  for a  large class of Lie algebras, which includes all simple Lie algebras, seems promising.

\section{Biderivations}

We start with the crucial lemma. The main idea of its proof, i.e., computing  $\delta([x,y],[z,w])$ in two different ways, is well-known; it was first used in associative 
rings \cite{BMM}, and later also in Lie algebras \cite{Chen2016}. The formula \eqref{ena} from the middle of the proof is actually known.
 However, the final result that we will derive using this approach is, to the best of our knowledge, new.

\begin{lemma}\label{bl}
Let $L$ be a Lie algebra   and let $M$ be an $L$-module. If $\delta:L\times L\to M$ is a skew-symmetric biderivation, then
$$  \delta(u,[x,y]) - u\cdot\delta(x,y)\in Z_M(L'), \forall u,x,y\in L.$$
\end{lemma}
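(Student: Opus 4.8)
The plan is to exploit the hint given in the paper: compute the expression $\delta([x,y],[z,w])$ in two different ways, using that $\delta$ is a derivation in each slot, and compare. First I would write, for fixed $z,w$, the map $a \mapsto \delta(a,[z,w])$ as a derivation, so that
\[
\delta([x,y],[z,w]) = x\cdot\delta(y,[z,w]) - y\cdot\delta(x,[z,w]).
\]
Independently, using skew-symmetry, $\delta([x,y],[z,w]) = -\delta([z,w],[x,y])$, and expanding the derivation in the first slot again, $\delta([z,w],[x,y]) = z\cdot\delta(w,[x,y]) - w\cdot\delta(z,[x,y])$. This yields the identity $\bigl(\text{formula \eqref{ena}}\bigr)$ that the paper says is already known. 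The idea is that this symmetric identity, once rearranged, should isolate the quantity $\delta(u,[x,y]) - u\cdot\delta(x,y)$ up to something annihilated by $L'$.

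Next I would try to make the dependence on $[x,y]$ versus on a single element $u$ transparent. A cleaner route: since $x\mapsto\delta(x,y)$ is a derivation and $L$ acts on $M$, one can compute $\delta([u,v],[x,y])$ by expanding in the \emph{first} slot to get $u\cdot\delta(v,[x,y]) - v\cdot\delta(u,[x,y])$, and also by first using skew-symmetry to move $[x,y]$ to the front and expanding there. Setting the two expansions equal and then also expanding $\delta(u,[x,y])$ using the fact that $v\mapsto\delta(u,v)$ is a derivation (so $\delta(u,[x,y]) = x\cdot\delta(u,y) - y\cdot\delta(u,x)$, modulo the sign from skew-symmetry of the roles), I expect to obtain after cancellation an identity of the shape
\[
v\cdot\bigl(\delta(u,[x,y]) - u\cdot\delta(x,y)\bigr) = (\text{terms that are manifestly symmetric or vanish}),
\]
and then a second application with the roles of the two ``derivation directions'' swapped gives that $v\cdot\bigl(\delta(u,[x,y]) - u\cdot\delta(x,y)\bigr)$ is actually zero for all $v$ of the form $[p,q]$, hence for all $v\in L'$. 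That is exactly the assertion $\delta(u,[x,y]) - u\cdot\delta(x,y)\in Z_M(L')$.

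Concretely, I would denote $\Phi(u,x,y) := \delta(u,[x,y]) - u\cdot\delta(x,y)$ and aim to show $[p,q]\cdot\Phi(u,x,y)=0$ for all $p,q$. Expanding $\delta([p,q],[x,y])$ two ways and using the Jacobi/module identity $[p,q]\cdot m = p\cdot(q\cdot m) - q\cdot(p\cdot m)$ to reorganize the left-hand side, together with the derivation property of $\delta$ in each variable, the cross terms should telescope; the residual terms will package precisely into $[p,q]\cdot\Phi(u,x,y)$ plus an analogous expression with $(p,q)$ and $(x,y)$ interchanged, and skew-symmetry of $\delta$ forces their sum (and, by a symmetry/sign argument, each) to vanish.

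The main obstacle I anticipate is purely bookkeeping: keeping the signs straight through the repeated use of skew-symmetry (which flips a sign every time the two slots are swapped) and the module-action commutator identity, and making sure the ``symmetric'' leftover terms genuinely cancel rather than merely look like they should. In particular, one has to be careful that the derivation property is being applied in the correct slot each time, since $\delta$ being a biderivation gives a derivation in the first slot directly and in the second slot only after invoking skew-symmetry. Once the identity \eqref{ena} is in hand, though, the extraction of $\Phi(u,x,y)\in Z_M(L')$ should be a short rearrangement.
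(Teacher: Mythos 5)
Your opening move is the right one and matches the paper's: expand $\delta([x,y],[z,w])$ in two ways using the derivation property in the first slot together with skew-symmetry. (Strictly speaking, that comparison yields the four-variable identity \eqref{q}, and passing from \eqref{q} to \eqref{ena} still requires specializing to get $[x,y]\cdot\delta(x,y)=0$, linearizing twice, and recombining; you gloss over this, but since \eqref{ena} is a known formula I would not count that as the main problem.)

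The genuine gap is in how you propose to finish. The target statement $[p,q]\cdot\bigl(\delta(u,[x,y])-u\cdot\delta(x,y)\bigr)=0$ involves \emph{five} independent variables, whereas your concrete plan --- ``expand $\delta([p,q],[x,y])$ two ways and the residual terms package into $[p,q]\cdot\Phi(u,x,y)$'' --- starts from an expression containing only $p,q,x,y$ and no $u$ at all; carried out, that computation just reproduces \eqref{q} with relabelled variables and cannot isolate $\Phi(u,x,y)$. The missing idea is to bring in the Jacobi identity on a \emph{triple} bracket: apply \eqref{ena} to $[[x,u],y]\cdot\delta(z,w)$ once directly and once after writing $[[x,u],y]=[[x,y],u]+[[y,u],x]$, which yields that $[w,z]$ annihilates the cyclic sum $\delta([x,y],u)+\delta([y,u],x)+\delta([u,x],y)$ (this is \eqref{dva}). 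Expanding each term of that cyclic sum by the biderivation property and using skew-symmetry, the sum collapses to $-2\bigl(\delta(u,[x,y])-u\cdot\delta(x,y)\bigr)$, and since $\mathrm{char}(F)\neq 2$ the conclusion follows. Without this step (or an equivalent device that injects the fifth variable), the ``telescoping'' you anticipate does not occur, so the proof as proposed does not close.
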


\begin{proof}
For any $x,y,z,w\in L$, we have
\begin{align*}
&\delta([x,y],[z,w])\\
 =& x\cdot \delta(y,[z,w]) - y\cdot \delta(x,[z,w])\\
=& -x\cdot \delta([z,w],y) + y\cdot \delta([z,w],x)\\
=& -x\cdot (z\cdot \delta(w,y))+x\cdot (w\cdot \delta(z,y))
+ y\cdot (z\cdot \delta(w,x))-y\cdot (w\cdot \delta(z,x)).
%\\
%=& x\cdot (z\cdot \delta(y,w))-x\cdot (w\cdot \delta(y,z))
%- y\cdot (z\cdot \delta(x,w))+y\cdot (w\cdot \delta(x,z)).
\end{align*}
On the other hand,
\begin{align*}
&\delta([x,y],[z,w])\\
=&-\delta([z,w],[x,y])\\
 =& -z\cdot \delta(w,[x,y]) + w\cdot \delta(z,[x,y])\\
 =& z\cdot \delta([x,y],w) - w\cdot \delta([x,y],z)\\
=& z\cdot (x\cdot \delta(y,w))-z\cdot (y\cdot \delta(x,w))-w\cdot (x\cdot \delta(y,z))+w\cdot (y\cdot \delta(x,z)).
\end{align*}
Comparing both relations and using the assumption that $\delta$ is skew-symmetric, we obtain
\begin{equation}\label{q}
[x,z]\cdot \delta(y,w) + [y,w]\cdot \delta(x,z) = [x,w]\cdot \delta(y,z) + [y,z]\cdot \delta(x,w)
\end{equation}
Note that $\delta(y,y)=0$ since $\delta$ is skew-symmetric and  char$(F)\ne 2$.
Writing $y$ for $w$ and $x$ for $z$ in \eqref{q} therefore gives 
$$[x,y]\cdot \delta(y,x) + [y,x]\cdot \delta(x,y) =0,$$
that is,
 $2[x,y]\cdot \delta(x,y)=0$. Consequently, $$[x,y]\cdot \delta(x,y)=0.$$
A linearization on $x$ yields
$$[x,y]\delta(z,y) + [z,y]\delta(x,y)=0.$$
Further, linearizing this relation on $y$ we get
$$[x,y]\delta(z,w) + [x,w]\delta(z,y) + [z,y]\delta(x,w) + [z,w]\delta(x,y) =0.$$
 Now, rewrite \eqref{q} so that  the roles of $y$ and $z$ are replaced:
$$[x,y]\cdot \delta(z,w) + [z,w]\cdot \delta(x,y) - [x,w]\cdot \delta(z,y) - [z,y]\cdot \delta(x,w)=0.$$
Summing up the last two relations 
we get
$$2\bigl([x,y]\cdot \delta(z,w) + [z,w]\cdot \delta(x,y)\bigr)=0,$$
which implies \begin{equation}\label{ena}[x,y]\cdot \delta(z,w) = [w,z]\cdot \delta(x,y).\end{equation}
In particular,
\begin{equation}\label{r}[[x,u],y]\cdot \delta(z,w) = [w,z]\cdot \delta([x,u],y).\end{equation}
 On the other hand, by the Jacobi identity,
$$[[x,u],y]\cdot \delta(z,w) = [[x,y],u]\cdot \delta(z,w) + [[y,u],x]\cdot\delta(z,w), $$
and hence, by \eqref{ena}, 
\begin{equation}\label{rr}[[x,u],y]\cdot \delta(z,w) = [w,z]\cdot \delta ([x,y],u) + [w,z]\cdot \delta([y,u],x).\end{equation}
Comparing \eqref{r} and \eqref{rr} we get 
\begin{equation}\label{dva} 
[w,z]\cdot \bigl( \delta ([x,y],u) + \delta([y,u],x) + \delta([u,x],y\bigr) =0. \end{equation}
 Now, applying that $\delta$ is a skew-symmetric biderivation it follows from \eqref{dva} by a direct calculation that
  $$2[w,z]\cdot \bigl(x\cdot \delta(y,u) + y\cdot\delta(u,x) + u\cdot\delta(x,y)\bigr)=0.$$
	Since 
	$$x\cdot \delta(y,u) + y\cdot\delta(u,x) = x\cdot \delta(y,u) - y\cdot\delta(x,u) = \delta([x,y],u) = -\delta(u,[x,y]),  $$
the desired conclusion follows.
\end{proof}

%Every skew-symmetric biderivation $\delta$ of a Lie algebra $L$ over a field of characteristic not $2$ satisfies 
%\begin{equation} \label{bd}
%[\delta(x,y),[z,w]] = [[x,y],\delta(z,w)]\quad\mbox{for all $x,y,z,w\in L$.}
%\end{equation}
%(This is known, one just has to find the most adequate reference.)

More can be said if $L$ is perfect.

\begin{lemma}\label{bl2}
Let $L$ be a perfect Lie algebra    and let $M$ be an $L$-module. If $\delta:L\times L\to M$ is a skew-symmetric biderivation, then
\begin{equation}\label{a} \delta(u,[x,y]) = u\cdot\delta(x,y), \forall u,x,y\in L.\end{equation}
\end{lemma}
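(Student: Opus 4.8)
The plan is to bootstrap from Lemma \ref{bl}, which already gives $\delta(u,[x,y]) - u\cdot\delta(x,y)\in Z_M(L')$ for all $u,x,y\in L$. Since $L$ is perfect, $L = L' = [L,L]$, so every element of $L$ is a (finite) sum of brackets. The naive hope is that $Z_M(L') = Z_M(L) = \{0\}$, but this is not assumed here; instead I must show that the particular element $\delta(u,[x,y]) - u\cdot\delta(x,y)$ vanishes on its own, without any nondegeneracy hypothesis on $M$.

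First I would record the bilinearity structure: fixing $u$, the map $(x,y)\mapsto \delta(u,[x,y]) - u\cdot\delta(x,y)$ is bilinear and skew-symmetric in $(x,y)$, and by perfectness it suffices to prove it vanishes on pairs, i.e. to show $\delta(u,v)$ relates correctly to any bracket expression for $v$. The key idea is that $L$ acts trivially on the error term: from Lemma \ref{bl}, writing $E(u;x,y) := \delta(u,[x,y]) - u\cdot\delta(x,y)$, we have $w\cdot E(u;x,y) = 0$ for all $w\in L$. Now I would apply this with $u$ itself replaced by a bracket, say $u = [a,b]$, and use the biderivation property to expand $\delta([a,b],[x,y]) = a\cdot\delta(b,[x,y]) - b\cdot\delta(a,[x,y])$. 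Substituting the Lemma \ref{bl} relations for $\delta(b,[x,y])$ and $\delta(a,[x,y])$ modulo $Z_M(L')$, and comparing with $[a,b]\cdot\delta(x,y)$ expanded as $a\cdot(b\cdot\delta(x,y)) - b\cdot(a\cdot\delta(x,y))$, the terms should match exactly: the annihilator-valued discrepancies get killed because they are hit by $a$ and $b$, which lie in $L$ and hence annihilate $Z_M(L')$. This shows $E([a,b];x,y) = 0$, and since $L$ is spanned by such brackets $[a,b]$, linearity in the first slot gives $E(u;x,y)=0$ for all $u$.

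Let me be more careful about the second slot, since \eqref{a} asserts equality for the bracket sitting in the \emph{second} argument. Using skew-symmetry, $E(u;x,y) = \delta(u,[x,y]) - u\cdot\delta(x,y) = -\delta([x,y],u) - u\cdot\delta(x,y)$, and I want this to be zero. Having shown (via the first-slot argument above) that $\delta(v,[x,y]) = v\cdot\delta(x,y)$ whenever $v\in L' = L$, we are done directly — the point is simply that the variable $u$ in Lemma \ref{bl} ranges over $L = L'$, so the ``first slot'' and what I called above are the same. So really the whole content is: promote $u$ from a general element to a bracket $[a,b]$, expand both $\delta([a,b],[x,y])$ and $[a,b]\cdot\delta(x,y)$ via the derivation/action axioms down to expressions in $\delta(a,\cdot)$, $\delta(b,\cdot)$ applied to $[x,y]$, invoke Lemma \ref{bl} to replace each $\delta(c,[x,y])$ by $c\cdot\delta(x,y)$ plus something in $Z_M(L')$, and observe that all the $Z_M(L')$-terms are annihilated by the outer action of $a$ or $b$.

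The main obstacle I anticipate is purely bookkeeping: making sure that in the expansion of $\delta([a,b],[x,y])$, \emph{every} occurrence of a term of the form $c\cdot(\text{something in }Z_M(L'))$ genuinely has its ``$c$'' as an honest element of $L$ acting, so that it dies — one has to be careful that no leftover $Z_M(L')$-valued term survives unmultiplied. If such a bare term did survive, the argument would only recover the conclusion modulo $Z_M(L')$, which is exactly what Lemma \ref{bl} already gave. I expect the Jacobi identity and skew-symmetry of $\delta$ to conspire so that the bare terms cancel in pairs, mirroring the cancellation already seen in the proof of Lemma \ref{bl}; verifying this cancellation explicitly is the crux. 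Once that is done, perfectness ($L = L'$) lets us write any element of $L$ as a sum of brackets and conclude \eqref{a} in full generality.
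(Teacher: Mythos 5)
Your proposal is correct and takes essentially the same approach as the paper: expand $\delta([a,b],[x,y]) - [a,b]\cdot\delta(x,y)$ as $a\cdot\bigl(\delta(b,[x,y]) - b\cdot\delta(x,y)\bigr) - b\cdot\bigl(\delta(a,[x,y]) - a\cdot\delta(x,y)\bigr)$, kill both terms by Lemma \ref{bl} together with $a,b\in L=L'$, and extend to all $u$ by linearity and perfectness. The ``bare term'' worry in your last paragraph is moot, since the derivation expansion places every discrepancy under an outer action of $a$ or $b$, so no Jacobi-identity cancellation is needed.
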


\begin{proof}
For any $u,v,x,y\in L$, we have
\begin{align*}
&\delta([u,v],[x,y]) - [u,v]\cdot \delta(x,y) \\
= & u\cdot \delta(v,[x,y]) - v\cdot \delta(u,[x,y]) - u\cdot (v\cdot \delta(x,y)) + v\cdot (u\cdot \delta(x,y)) \\
=& u\cdot \Big(\delta(v,[x,y]) - v\cdot \delta(x,y)\Big) - v\cdot \Big(\delta(u,[x,y]) - u\cdot \delta(x,y)\Big).
\end{align*}
 Lemma \ref{bl}, together with assumption that $L=L'$, yields \eqref{a}.
\end{proof}

 We are now in a position to prove our fundamental theorem.

\begin{theorem}\label{mt}
Let $L$ be a  perfect Lie algebra   and let $M$ be an  $L$-module such that $Z_M(L)=\{0\}$.
Then every skew-symmetric biderivation $\delta:L\times L\to M$  is of the form
$\delta(x,y) = \gamma([x,y])$ where $\gamma\in {\rm Cent}(M)$.
\end{theorem}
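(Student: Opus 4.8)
The plan is to use Lemma \ref{bl2} to manufacture the map $\gamma$ directly. Since $L$ is perfect, every element of $L$ can be written (not necessarily uniquely) as a finite sum $\sum_i [x_i,y_i]$, and the natural candidate is
$$\gamma\Bigl(\sum_i [x_i,y_i]\Bigr) = \sum_i \delta(x_i,y_i).$$
First I would check that this is well-defined, i.e., that $\sum_i [x_i,y_i]=0$ forces $\sum_i \delta(x_i,y_i)=0$. This is where Lemma \ref{bl2} does the work: for any $u\in L$, formula \eqref{a} gives $u\cdot\sum_i\delta(x_i,y_i) = \sum_i \delta(u,[x_i,y_i]) = \delta\bigl(u,\sum_i[x_i,y_i]\bigr) = \delta(u,0)=0$ (using bilinearity of $\delta$). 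Hence $\sum_i \delta(x_i,y_i)\in Z_M(L)=\{0\}$, so the value is independent of the representation. This simultaneously shows $\gamma$ is well-defined and linear (since if two sums represent the same element, a common refinement argument — or just subtracting — gives equality), and $\gamma$ is clearly $F$-linear on $L=L'$.

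Next I would verify $\gamma\in{\rm Cent}(M)$, that is, $\gamma([x,y]) = x\cdot\gamma(y)$ for all $x,y\in L$. Fix $x\in L$ and write $y = \sum_i [u_i,v_i]$. Then on one hand $\gamma([x,y]) = \gamma\bigl(\sum_i [x,[u_i,v_i]]\bigr)$; expanding $[x,[u_i,v_i]]$ via Jacobi as a sum of brackets and applying the definition of $\gamma$, together with the fact that $z\mapsto\delta(x,z)$ is a derivation, should collapse this to $x\cdot\sum_i\delta(u_i,v_i) = x\cdot\gamma(y)$. More cleanly: $\gamma([x,y]) = \delta(x,y)$ is false in general (the first slot need not match), so instead I would compute directly using \eqref{a}. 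We have $\gamma(y)=\sum_i\delta(u_i,v_i)$, so $x\cdot\gamma(y) = \sum_i x\cdot\delta(u_i,v_i) = \sum_i \delta(x,[u_i,v_i])$ by \eqref{a} $= \delta(x,\sum_i[u_i,v_i]) = \delta(x,y)$. Thus $x\cdot\gamma(y)=\delta(x,y)$ for all $x,y$. It remains to see $\gamma([x,y])=\delta(x,y)$: but $[x,y]$ is itself a bracket, so by the very definition of $\gamma$, $\gamma([x,y])=\delta(x,y)$. Combining the two displays gives $\gamma([x,y]) = \delta(x,y) = x\cdot\gamma(y)$, which is exactly the centroid condition, and along the way we have also obtained the desired formula $\delta(x,y)=\gamma([x,y])$.

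The only genuinely delicate point is the well-definedness of $\gamma$, and I have addressed it above: the key is that $Z_M(L)=\{0\}$ lets us detect equality in $M$ by applying all of $L$, and Lemma \ref{bl2} translates the action of $L$ on $\delta$-values into an evaluation of $\delta$ at a bracket, which is sensitive only to the bracket itself and not its decomposition. Everything else is a short formal manipulation using bilinearity of $\delta$, the derivation property in the second slot, and \eqref{a}. I would present it in the order: (i) define $\gamma$ on brackets and extend by linearity, (ii) prove well-definedness via $Z_M(L)=\{0\}$ and Lemma \ref{bl2}, (iii) derive $\delta(x,y)=x\cdot\gamma(y)$ from \eqref{a}, (iv) note $\gamma([x,y])=\delta(x,y)$ by definition, hence conclude both $\gamma\in{\rm Cent}(M)$ and $\delta(x,y)=\gamma([x,y])$.
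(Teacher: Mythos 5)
Your proposal is correct and follows essentially the same route as the paper: define $\gamma$ on brackets by $\gamma([x,y])=\delta(x,y)$, use Lemma \ref{bl2} together with $Z_M(L)=\{0\}$ to get well-definedness, and then read \eqref{a} as $\delta(u,v)=u\cdot\gamma(v)$ to conclude $\gamma\in{\rm Cent}(M)$ and $\delta(x,y)=\gamma([x,y])$. The only difference is that you spell out the centroid verification in slightly more detail; the substance is identical.
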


\begin{proof}
 Define $\gamma:L=L' \to M$ by 
\begin{equation}\label{gama}\gamma( [x,y]) =  \delta(x,y), \forall x, y\in L.\end{equation}
Let us show that Lemma \ref{bl2} implies  that 
$\gamma$ is well-defined. Indeed,  assuming that $\sum_i [x_i,y_i]=0$, we have
$$0= \delta\Big(u,\sum_i[x_i,y_i]\Big)=\sum_i \delta(u,[x_i,y_i]) = u\cdot\Big(\sum_i \delta(x_i,y_i)\Big),$$
and hence  $\sum_i \delta(x_i,y_i)=0$ follows from
 $Z_M(L) =\{0\}$. 
We can now write \eqref{a} as
$
\delta(u,v) = u\cdot\gamma(v)$ for all $u, v\in L$.
Together with \eqref{gama} this shows that $\gamma\in {\rm Cent}(M)$.
%Thus $u\cdot\gamma(v)=\delta(u, v)=\gamma([u, v])$, showing that $\gamma\in {\rm Cent}(M)$.
\end{proof}

From now on we consider   skew-symmetric biderivations on a Lie algebra $L$, that is,  skew-symmetric biderivations from $L\times L$ to $L$.  We first record an immediate corollary to Theorem \ref{mt}.

\begin{corollary}\label{mco}
If $L$ is a  perfect and  centerless Lie algebra (in particular, if $L$ is simple), 
then every  skew-symmetric biderivation $\delta$ on $L$   is of the form
$\delta(x,y) = \gamma([x,y])$ where $\gamma\in {\rm Cent}(L)$.
\end{corollary}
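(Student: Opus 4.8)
The plan is to deduce Corollary \ref{mco} directly from Theorem \ref{mt} by choosing the module $M$ appropriately. Take $M=L$ with the adjoint action $x\cdot y=[x,y]$. Since $L$ is assumed perfect, the hypothesis ``$L$ is perfect'' in Theorem \ref{mt} is satisfied verbatim. It remains to verify the condition $Z_M(L)=\{0\}$ in this setting, which unwinds to exactly the statement that $Z_L(L)=\{y\in L: [L,y]=\{0\}\}$ is trivial, i.e.\ that $L$ is centerless. This is precisely the second hypothesis of the corollary, so both hypotheses of Theorem \ref{mt} hold.

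Applying Theorem \ref{mt} then yields that every skew-symmetric biderivation $\delta:L\times L\to L$ is of the form $\delta(x,y)=\gamma([x,y])$ with $\gamma\in{\rm Cent}(M)={\rm Cent}(L)$; note that with $M=L$ the centroid ${\rm Cent}(M)$ of the module is by definition the usual centroid ${\rm Cent}(L)$ of the Lie algebra, as the paper observes right after introducing ${\rm Cent}(M)$. This gives the conclusion.

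The only remaining point is the parenthetical remark that a simple Lie algebra is perfect and centerless. If $L$ is simple, then $L'=[L,L]$ is an ideal of $L$; since $L$ is nonabelian (simplicity excludes the abelian case), $L'\neq\{0\}$, so $L'=L$ and $L$ is perfect. Likewise the center $Z$ is an ideal of $L$, and $Z\neq L$ since $L$ is nonabelian, so $Z=\{0\}$ and $L$ is centerless. Hence the simple case is a special instance of the corollary.

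There is essentially no obstacle here: the corollary is a direct specialization of Theorem \ref{mt}, and the main content is simply the observation that the module-theoretic conditions $Z_M(L)=\{0\}$ and ${\rm Cent}(M)$ reduce to the familiar notions ``$L$ centerless'' and ${\rm Cent}(L)$ when $M=L$ carries the adjoint action. The mild care needed is only in the simplicity remark, where one must rule out the one-dimensional abelian Lie algebra (which is simple under some conventions but not all); under the standard convention that simple Lie algebras are nonabelian this is immediate.
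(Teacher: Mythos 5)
Your proof is correct and is exactly the paper's approach: the corollary is recorded as an immediate specialization of Theorem \ref{mt} to $M=L$ with the adjoint action, under which $Z_M(L)=\{0\}$ becomes the centerless hypothesis and ${\rm Cent}(M)$ becomes ${\rm Cent}(L)$. The added remark on the simple case is fine and matches the standard convention.
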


Our goal now is to examine concrete situations to which our results are applicable.  To this end, we need  some auxiliary notions and results. 

Let $L$ be  an arbitrary Lie algebra.   Note that any skew-symmetric bilinear map $\delta:L\times L\to Z=Z_L(L)$ with $\delta(L, L')=0$  is a skew-symmetric biderivation. We will call it a {\it trivial biderivation} on $L$.  It is clear that every skew-symmetric biderivation having the range in $Z$ is automatically trivial.

If $\delta:L\times L\to L$  is  an arbitrary  skew-symmetric biderivation,
 for any $x, y\in L, z\in Z$ we have $$0=\delta([z,x],y)=[z, \delta(x,y)]+[\delta(z,y), x]=[\delta(z,y), x],$$
hence $\delta(Z, L)\subset Z$.
Consequently, setting $\bar L =L/Z$ we can define a 
skew-symmetric biderivation $\bar\delta:\bar L\times \bar L\to \bar L$ by
$$\bar \delta(\bar x, \bar y)=\overline{\delta( x, y)},$$
where $\bar x=x+Z\in \bar L$ for $x\in L$.

\begin{lemma}\label{mt1}
Let $L$ be a  Lie algebra.
Up to trivial biderivations  on $L$,  the map  $\delta\to \bar \delta$ is a 1-1 map from skew-symmetric biderivations on $L$ to skew-symmetric biderivations on $\bar L$. \end{lemma}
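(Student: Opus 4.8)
The plan is to show that the linear map $\delta\mapsto\bar\delta$ has kernel exactly the space of trivial biderivations on $L$. Since two skew-symmetric biderivations $\delta_1,\delta_2$ on $L$ satisfy $\bar\delta_1=\bar\delta_2$ if and only if $\delta_1-\delta_2$ lies in that kernel, this is precisely the asserted $1$-$1$ statement (modulo trivial biderivations). Linearity of $\delta\mapsto\bar\delta$ is immediate, and the fact that $\bar\delta$ is a well-defined skew-symmetric biderivation on $\bar L$ has already been recorded in the paragraph preceding the lemma, so the only thing left to establish is the description of the kernel.

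One inclusion is clear: a trivial biderivation on $L$ takes its values in $Z$, hence is sent to $0$ under $\delta\mapsto\bar\delta$. For the reverse inclusion, I would start with a skew-symmetric biderivation $\delta$ on $L$ such that $\bar\delta=0$, i.e.\ $\delta(L,L)\subseteq Z$, and check that $\delta$ is actually trivial, which by definition additionally requires $\delta(L,L')=\{0\}$. This follows by evaluating the biderivation identity: for $x,y,u\in L$,
\[
\delta([x,y],u)=[x,\delta(y,u)]-[y,\delta(x,u)],
\]
and both terms on the right vanish because $\delta(x,u),\delta(y,u)\in Z=Z_L(L)$. Hence $\delta(L',L)=\{0\}$, and skew-symmetry gives $\delta(L,L')=\{0\}$, so $\delta$ is trivial.

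Combining the two inclusions, the kernel of $\delta\mapsto\bar\delta$ is exactly the space of trivial biderivations on $L$, which is the claim. I do not anticipate a real obstacle here; the only subtlety is purely bookkeeping — one must remember that ``trivial'' packages \emph{two} conditions (values in $Z$, and annihilation of $L'$), and so the second condition has to be \emph{derived} from $\delta(L,L)\subseteq Z$, as done above via the biderivation identity. (If one also wanted the map to be onto, that would be the genuinely delicate point: lifting a biderivation from $\bar L$ to $L$ through a linear section of $L\to\bar L$ yields only an approximate biderivation, whose $Z$-valued defect need not be correctable; but the statement as formulated asks only for injectivity up to trivial biderivations.)
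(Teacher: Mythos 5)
Your proof is correct and follows essentially the same route as the paper: the paper's proof also reduces to observing that $\delta=\delta_1-\delta_2$ is a skew-symmetric biderivation with $\delta(L,L)\subseteq Z$ and then invokes its earlier remark that any such biderivation is automatically trivial. The only difference is that you explicitly carry out the one-line verification (via the biderivation identity) that the paper dismisses as ``clear.''
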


\begin{proof} Let $\delta_1, \delta_2$ be skew-symmetric biderivations on $L$  such that  $\bar \delta_1=\bar  \delta_2$. Then  $\delta=\delta_1-\delta_2$ is a skew-symmetric biderivation with $\delta(L, L)\subset Z$. Thus,  $\delta$ is a trivial biderivation on $L$.
\end{proof}

%Next we consider the situation where $L$ is centerless. 
 Any skew-symmetric biderivation $\delta: L\times L\to L$ satisfying $\delta(L',L')=0$ 
with range in $Z_L(L')$ will be  called a {\em special biderivation}.
%Note that any special biderivation $\delta$ satisfies $\delta(L, L')=0$.

\begin{example}
Every Lie algebra $L$ with nontrivial center and such that the codimension of $L'$ in $L$ is greater than $1$ has nonzero special biderivations. Indeed, 
by the codimension assumption, there exists a nonzero skew-symmetric bilinear functional $\omega:L\times L\to F$ such that $\omega(L,L')=\{0\}$, and taking any nonzero $z_0\in Z$, we have that $\delta(x,y) = \omega(x,y)z_0$ is a nonzero special biderivation which is also trivial.  
\end{example}

The next example shows that there exist special biderivations $\delta$  that are  not of the form $\delta(x, y)=\gamma([x, y])+\delta_0(x, y)$ where  $\gamma\in {\rm Cent}(L)$ and $\delta_0$ is a trivial biderivation.

\begin{example} \label{special}
Let $\mathcal F$ be the free Lie algebra in  variables $x_1,x_2,x_3$. Denote by $\mathcal I$ the ideal of $\mathcal F$ generated by all elements of the form 
$[[x_1,f_1],f_2]+[[x_1,f_2],f_1]$ where $f_1, f_2\in \mathcal F$, and by $\mathcal J$ the ideal of $\mathcal F$ generated by all elements of the form 
$[[x_1,g_1],g_2]$ where $g_1, g_2\in \mathcal F'$.

Set $L=\mathcal F/(\mathcal I+\mathcal J)$. It is clear that $a=\bar x_1=x_1+\mathcal I+\mathcal J$ is a nonzero element in $L$. Define the bilinear map $\delta(x, y)=[[a, x], y]$ for all $x, y\in L$. It is easy to see that $\delta$ is a special biderivation on $L$.

Assume there are $\gamma\in {\rm Cent}(L)$ and a trivial biderivation $\delta_0$ on $L$ such that 
$$\delta(x, y)=\gamma([x, y])+\delta_0(x, y).$$ For $x, y, u\in L$, we then have
$$\aligned &[[[a, [x, y]], u]=\delta([x, y], u)\\
=&\gamma([[x, y], u])+\delta_0([x, y], u)
=[\gamma([x, y]), u]+\delta_0([x, y], u)\\
=&[\delta(x, y), u]+\delta_0([x, y], u)
=[[[a, x], y], u]+\delta_0([x, y], u),
\endaligned
$$
yielding that $[[[a, y], x], u]\in Z$, the center of $L$. We claim that this is not the case. Specifically, we will show that
 \begin{equation}\label{jk}[[[[\bar x_1, \bar x_2], \bar x_3], \bar x_3], \bar x_3]\ne0 ,\end{equation} so that $[[\bar x_1, \bar x_2], \bar x_3], \bar x_3]\notin Z$. 

As usual, we can define monomials, the degrees $\deg_1, \deg_2, \deg_3$ of a monomial with respect to $x_1, x_2, x_3$ respectively, and homogeneous elements in $\mathcal F$.
Then every element in $\mathcal I$ (or $\mathcal J$) can be written as a sum of homogeneous elements in $\mathcal I$  (or $\mathcal J$). It is easy to see that there are no nonzero elements in $\mathcal J$ with $\deg_1=1, \deg_2=1$ and $ \deg_3=3$.
The only homogeneous elements  with $\deg_1=1, \deg_2=1$ and $ \deg_3=3$ 
in $\mathcal I$ are 
\begin{align*}
&[[[[ x_1,   x_2],   x_3]+ [[ x_1,   x_3],   x_2],x_3], x_3]\\
&=[[[[ x_1,   x_2],   x_3],x_3], x_3]+[[[[x_1, x_3], x_2], x_3], x_3],\\
 &[[[ x_1,   x_3],   x_3], [x_2, x_3]]=[[[ x_1,   x_3],   x_3], x_2], x_3]-[[[ x_1,   x_3],   x_3], x_3], x_2],\\
 &[[[[ x_1,   x_3],   x_3], x_2], x_3].
\end{align*}
Since $$[[[[ x_1,   x_2],   x_3],x_3], x_3],\,\,\, [[[[x_1, x_3], x_2], x_3], x_3],$$ 
$$ [[[[ x_1,   x_3],   x_3], x_2], x_3],\,\,\,[[[[ x_1,   x_3],   x_3], x_3], x_2],$$   are linearly independent in $\mathcal F,$ it follows that $[[[[ x_1,   x_2],   x_3],x_3], x_3]$ does not lie in $\mathcal I$, and hence neither in $\mathcal I+\mathcal J$. This proves \eqref{jk}.
Consequently, $\delta$ is not of the form $\delta(x, y)=\gamma([x, y])+\delta_0(x, y)$. 
\qed
\end{example}

%It should be pointed out that special biderivations do not necessarily arise from the centroid. As a trivial example, if $L$ is an %Abelian Lie algebra, then every
%skew-symmetric linear map from $L\times L$ to $L$ is a special biderivation. However, if it is nonzero, then it is not of the %form $\gamma([x,y])$ with $\gamma\in{\rm Cent}(L)$.

Every skew-symmetric biderivation $\delta:L\times L\to L$ satisfies 
\begin{equation}\label{a'''}
\delta(u,[x,y]) =[x,\delta(u,y)]+[ \delta(u,x), y]\in L', \forall x, y, u\in L.
\end{equation} 
  Thus we have a 
skew-symmetric biderivation $\delta': L'\times  L'\to   L'$ by restricting $\delta$ to $L'\times L'$.

\begin{lemma}\label{perfect}   Let $L$ be a  centerless Lie algebra. 
\begin{enumerate}
\item[{\rm (a)}] Up to a special biderivation, any skew-symmetric biderivation $\delta$ on $L$ is an extension of a   unique skew-symmetric  biderivation on $ L'$. 

\item[{\rm (b)}] If $L'$ is further perfect, then $L$ has no nonzero special biderivation.
\end{enumerate}
\end{lemma}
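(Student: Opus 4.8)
The plan is to prove the two parts in sequence, using the structural facts already collected: that $\delta(Z,L)\subseteq Z$, that $\bar\delta$ is well-defined on $\bar L=L/Z$, and that any $\delta$ restricts to a skew-symmetric biderivation $\delta'$ on $L'$ via \eqref{a'''}.

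For part (a), let $\delta$ be a skew-symmetric biderivation on $L$ and form its restriction $\delta'$ to $L'\times L'$. I first want to show that, modulo a special biderivation, $\delta$ is completely determined by $\delta'$. The idea is that $\delta(u,[x,y])=[x,\delta(u,y)]+[\delta(u,x),y]$ already expresses $\delta(u,v)$ for $v\in L'$ in terms of values $\delta(u,x)$ with one argument arbitrary; iterating, or rather applying Lemma \ref{bl} with $L$ centerless so that $Z_L(L')=Z_L(L')$ need not vanish, one gets $\delta(u,[x,y])-[u,[x,y]]\cdot(\text{something})\in Z_L(L')$ — more precisely, Lemma \ref{bl} gives $\delta(u,[x,y])-u\cdot\delta(x,y)\in Z_L(L')$ for all $u,x,y$. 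So if $\delta_1,\delta_2$ are two skew-symmetric biderivations on $L$ with $\delta_1'=\delta_2'$, then $\delta=\delta_1-\delta_2$ vanishes on $L'\times L'$, and by Lemma \ref{bl} we get $\delta(u,[x,y])=\delta(u,[x,y])-u\cdot\delta(x,y)\in Z_L(L')$ (the term $u\cdot\delta(x,y)=[u,\delta(x,y)]$ with $x,y\in L$; but we need $\delta(x,y)$, not $\delta'$...). The cleaner route: use \eqref{a'''} to see $\delta(L,L')\subseteq L'$, and then show $\delta$ restricted appropriately lands in $Z_L(L')$ and kills $L'\times L'$, i.e. is special. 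Uniqueness of the biderivation on $L'$ that $\delta$ extends follows because $L'\times L'$-values are literally the restriction. The existence of an extension of a \emph{given} skew-symmetric biderivation on $L'$ is not asserted here — part (a) only says any $\delta$ on $L$ \emph{is} such an extension up to a special one — so I do not need a construction, only the injectivity-type statement, which is exactly Lemma \ref{bl} applied to the difference of two biderivations with equal restriction to $L'$.

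For part (b), assume $L'$ is perfect (and $L$ centerless). Let $\delta$ be a special biderivation on $L$: by definition $\delta(L',L')=\{0\}$ and $\mathrm{range}(\delta)\subseteq Z_L(L')$. I must show $\delta=0$. The point is that perfectness of $L'$ lets me run the argument of Lemma \ref{bl2}: for $x,y\in L$ with $u=[a,b]\in L'$, write $\delta(u,[x,y])=a\cdot\delta(b,[x,y])-b\cdot\delta(a,[x,y])$, and using Lemma \ref{bl} (the correction term lies in $Z_L(L')$, on which $L'$ acts trivially) conclude $\delta(u,[x,y])=u\cdot\delta(x,y)$ for $u\in L'$. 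But $\delta(x,y)\in Z_L(L')$, so $u\cdot\delta(x,y)=[u,\delta(x,y)]=0$ for $u\in L'$; hence $\delta(u,v)=0$ whenever $u\in L'$ and $v\in L'$ — which we already knew — but more usefully $\delta(L',\,L')=0$ upgrades: for arbitrary $v\in L$, writing... here I instead use that $L=L'$ is false in general, so I argue as follows. Every element of $L$ that appears as a second argument can be reduced: $\delta(u,v)$ with $u\in L$ arbitrary. Take $u$ arbitrary; since $L'$ is perfect, write any element of $L'$ as a sum of brackets of elements of $L'$, and use \eqref{a'''}: $\delta(u,[x,y])=[x,\delta(u,y)]+[\delta(u,x),y]$. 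With $x,y\in L'$ this shows $\delta(u,L'')\subseteq$ sums of $[L',Z_L(L')]=0$, and $L''=L'$ by perfectness, so $\delta(u,L')=0$ for every $u\in L$. Then for arbitrary $v\in L$, using Lemma \ref{bl}: $\delta(u,[x,y])-u\cdot\delta(x,y)\in Z_L(L')$; choosing representations... The upshot is $\delta(u,v)\in Z_L(L')$ always and $\delta(u,L')=0$; combined with skew-symmetry $\delta(L',u)=0$, and then a final application of the biderivation identity $\delta([x,y],z)=x\cdot\delta(y,z)-y\cdot\delta(x,z)$ with $z$ arbitrary forces $\delta(L',L)=0$, hence (skew-symmetry again) $\delta(L,L')=0$, and then $\delta([x,y],z)=0$ for all $x,y,z$ gives $\delta(L',L)=0$; finally any $\delta(u,v)$ with $u\in L$ is handled by $\delta(u,[x,y])=0$ forcing, via $Z_L(L)=\{0\}$ and the computation in the well-definedness argument of Theorem \ref{mt}, that $\delta=0$.

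The main obstacle I expect is bookkeeping the reductions in part (b): the Lie algebra $L$ itself need not be perfect, only $L'$, so I cannot directly invoke Lemma \ref{bl2} for $L$; I have to feed perfectness of $L'$ into Lemma \ref{bl} by hand and carefully track when an expression is guaranteed to lie in $L'$ (so that elements of $Z_L(L')$ are annihilated) versus merely in $L$. The centerless hypothesis is what finally converts ``$\delta(u,[x,y])=0$ for all $x,y$'' into ``$\delta(u,\cdot)=0$'', exactly as in the proof of Theorem \ref{mt}; I will reuse that step verbatim. Part (a) should be short once Lemma \ref{bl} is in hand, since it is essentially the observation that the discrepancy between two extensions is forced into $Z_L(L')$ and kills $L'\times L'$, i.e. is special by definition.
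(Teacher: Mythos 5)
Your proposal has genuine gaps in both parts. For (a): to conclude that $\delta=\delta_1-\delta_2$ is special you must show that \emph{all} of $\delta(L,L)$ lies in $Z_L(L')$, not just $\delta(L,L')$. Your sketch (via \eqref{a'''} with $u,y\in L'$, then Lemma \ref{bl}) only yields $\delta(L,L')\subseteq Z_L(L')$ and then $[L,\delta(L,L)]\subseteq Z_L(L')$, i.e. $[L',[L,\delta(L,L)]]=\{0\}$ --- which is strictly weaker than the required $[L',\delta(L,L)]=\{0\}$, and you stop there (your own parenthetical ``but we need $\delta(x,y)$, not $\delta'$'' flags exactly this). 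The missing bridge is a further use of \eqref{ena} to get $[\delta(L,L),[L,L']]=\{0\}$, then the Jacobi identity to write $[[[x,y],z],\delta(u,v)]=[[[x,y],\delta(u,v)],z]$, and finally centerlessness to kill $[[x,y],\delta(u,v)]$; none of this appears in your argument.

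For (b): your reduction to $\delta(L,L')=\{0\}$ (using $L'=[L',L']$, \eqref{a'''} with $x,y\in L'$, and the fact that $L'$ annihilates the range) is correct and matches the paper. But the final step fails. Knowing $\delta(u,[x,y])=0$ for all $x,y$ says nothing about $\delta(u,v)$ for $v\notin L'$, and the well-definedness computation from Theorem \ref{mt} that you invoke rests on Lemma \ref{bl2}, which requires $L$ itself perfect; here you only have $\delta(u,[x,y])-[u,\delta(x,y)]\in Z_L(L')$, so you merely learn $[L,\delta(x,y)]\subseteq Z_L(L')$, and combined with $\delta(x,y)\in Z_L(L')$ this does not force $\delta(x,y)=0$ (centerlessness would need $[L,\delta(x,y)]=\{0\}$, which you do not have). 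The paper closes this with a separate argument you never produce: if $\delta(x_1,x_2)=z_{12}\neq 0$, centerlessness gives $x_3$ with $[x_3,z_{12}]=z\neq 0$; expanding $\delta([x_1,x_3],x_2)=\delta([x_1,x_2],x_3)=\delta([x_2,x_3],x_1)=0$ by the biderivation identity yields $-z=[x_1,z_{23}]=[x_2,z_{13}]=z$, hence $2z=0$, contradicting ${\rm char}(F)\neq 2$. That this is where the characteristic hypothesis enters --- and that your version of (b) never uses it --- is a reliable sign that the argument as written cannot be completed along the route you chose.
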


\begin{proof}  (a)
 Let $\delta_1, \delta_2$ be biderivations on $L$  such that  $ \delta'_1= \delta'_2$. Let  $\delta=\delta_1-\delta_2$. Then $\delta(L', L')=0$.  Taking  $u, y\in L'$ in \eqref{a'''}  we see that 
$$[ \delta(u,x), y]=0, \forall x\in L, y, u\in L',$$
i.e.,
$\delta(L,L')\subset Z_L(L')$.  Applying this to Lemma \ref{bl}, we see that 
$[L, \delta(L, L)] \subset  Z_L(L')$. 

Using (\ref{ena}) we have 
$$0=[[L,L],\delta(L, L')]=[\delta(L,L), [L, L']].$$
Then for any $x, y, z, u,v\in L$ we have 
$$0=[[[x, y], z], \delta(u, v)]=[[[x, y], \delta(u, v)], z]+[[x, y], [z ,\delta(u, v)]]=[[[x, y], \delta(u, v)], z].$$
Since $L$ is centerless, we deduce that 
$\delta(L, L)\subset Z_L(L')$.
Thus,  $\delta$ is a special biderivation of $L$.

(b) Let $\delta$ be a special biderivation of $L$. From the beginning of the proof of (a) we see that $\delta(L,L')\subset Z_L(L')$. Since $L$ is perfect and centerless, it follows that $\delta(L,L')=0$.

%It is enough to show further that the above $\delta$ is $0$ if $L'$ is perfect. We continue from the proof of (a). Since 
%$L'$ is perfect, from $\delta(L,L')\subset Z_L(L')$ we see that $\delta(L,L')=0$.

Assuming that  $\delta\ne0$, there are $x_1, x_2\in L$ such that $\delta(x_1, x_2)=z_{12}\ne0$. Since $L$ is centerless we can find $x_3\in L$ so that $[x_3, z_{12}]=z\ne0$. Let $\delta(x_1, x_3)=z_{13}, \delta(x_2, x_3)=z_{23}$.
From 
$$\aligned &0=\delta([x_1, x_3], x_2)=[x_1,\delta( x_3, x_2)]-z=-[x_1, z_{23}]-z,\\
&0=\delta([x_1, x_2], x_3)=[z_{13}, x_2]+[x_1, z_{23}],\\
&0=\delta([x_2, x_3], x_1)=-[x_2,z_{13}]+z,
\endaligned 
$$
we deduce that $-z=[x_1, z_{23}]=-[z_{13}, x_2]=z$, a contradiction.
Thus, $\delta=0$.
\end{proof}

Let us explain an algorithm for finding
 all  skew-symmetric biderivations on a Lie algebra  $L$
 using  Lemmas \ref{mt1}  and \ref{perfect}. We have a sequence of quotient Lie algebras:
\begin{equation}\label{quo1} L_{(1)}=L, L_{(2)}=L_{(1)}/Z(L_{(1)}), \cdots,  L_{(r+1)}=L_{(r)}/Z(L_{(r)}).\end{equation}
If there is  $r\in\mathbb{N}$ such that $Z(L_{(r)})=0$, then repeatedly applying Lemma \ref{mt1} to the above sequence backward, we reduce the problem of finding   skew-symmetric  biderivations on $L$ to the problem of finding   skew-symmetric biderivations on the centerless Lie algebra $L_{(r+1)}$. If $L_{(r+1)}$ is also perfect, then  using Corollary \ref{mco} we have all  biderivations on 
$L_{(r+1)}$. We are done in this case.
If $L_{(r+1)}$ is not perfect, using Lemma \ref{perfect} we  reduce the problem of finding   skew-symmetric biderivations on $L_{(r+1)}$ to the problem of finding  skew-symmetric  biderivations on the   Lie algebra $L'_{(r+1)}$. Now we repeat the procedure  based on (\ref{quo1}) with $L$ replaced by $L'_{(r+1)}$, and continue this algorithm.
 
%This algorithm works for various Lie algebras, as we will see.
We will now apply our results to concrete examples.
% we first recall the following well-known result from \cite{D}.

%\begin{theorem}\label{Shur} Let $F$ be an algebraically closed field, 
% $L$ be a Lie algebra over $F$, and $M$ be an $L$-module such that ${\rm{Card}}(F)>\dim (M)$. Then any $L$-module endomorphism of  $M$ is a scalar.
%\end{theorem}

\begin{example}   
Let $L$ be a  simple Lie algebra over  an algebraically closed field $F$ such that ${\rm{Card}}(F)>\dim (L)$. 
As is well-known, every  $L$-module endomorphism of  $L$ is a scalar map \cite{D}.
% By Theorem \ref{Shur} we know that centroids of $L$ are scalar maps from $L$ to $L$.
Therefore, by Corollary \ref{mco}, every 
  skew-symmetric   biderivation $\delta$ of $L$ 
	is of the form  $\delta(x,y) = \lambda [x,y]$, $x,y\in L$, for some $\lambda\in F$. 
	
	This example, in particular, covers the results in 
\cite{Chen2016}. \qed
\end{example}

\begin{example} \label{ex2}   For $a, b\in \mathbb{C}$ with $(a, b)\notin \mathbb{Z}\times\{0, -1\}$,
the Lie algebra $${W}(a,b)={\rm span}_{\mathbb{C}}\{L_m, I_m\,|\,m\in \mathbb{Z}\}$$ is an infinite-dimensional Lie algebra over $\mathbb{C}$ equipped with the following brackets:
\begin{align*}
[L_m,L_n ]&=(n-m)L_{m+n}, \\
 [L_m,I_n]&=(n+a+bm)I_{m+n},  \\ [I_m,I_n]&=0,
\end{align*}
for all $m,n\in \mathbb{Z}$.
These Lie algebras $W(a, b)$ are perfect and centerless. Let  $L=\oplus _{i\in\mathbb{Z}}\mathbb{C}L_i$ and $I=\oplus _{i\in\mathbb{Z}}\mathbb{C}I_i$. Then $I$ is an ideal of $W(a, b)$ and $L$ is a subalgebra which is isomorphic to the Witt algebra. As indecomposable  $L$-modules, $L$ and $I$ are not isomorphic, and $L$-module homomorphisms of $L$ (and $I$) are scalar maps.

Let $\gamma \in {\rm Cent}(W(a, b))$. Then $$\gamma (I)\subset \gamma ([L, I])=[\gamma (L), I]\subset I,$$
 and $$\gamma ([L_m, I_n])=[\gamma (L_m), I_n]=[L_m, \gamma (I_n)].$$ 
 Since $\gamma$ can be considered as an $L$-module homomorphism, we see that $\gamma(L)\subset L$.
 There exist $c_1,c_2\in \mathbb{C}$ such that $\gamma (L_m)=c_1L_m, \gamma (I_m)=c_2I_m$. Furthermore, $c_2=c_1$.
So the centroid of $W(a, b)$ consists of scalar maps from $W(a, b)$ to $W(a, b)$.
 Corollary \ref{mco} therefore tells us that for every
  skew-symmetric   biderivation $\delta$ of $W(a,b)$  there exists $\lambda\in\mathbb{C}$ such that
  $\delta(x,y) = \lambda [x,y]$, $x,y\in W(a,b)$.
 %From Theorem \ref{mt}, then for any   skew-symmetric   biderivation $\delta$ of $W(a, b)$, there is $\lambda\in \mathbb{C}$ depending on $\delta$ so that  $\delta(x,y) = \lambda [x,y]$ for all $x,y\in W(a, b)$.  
\qed
\end{example}

\begin{example} \label{ex2'}   Consider
the Lie algebra $W={W}(0,0)$ which is defined using the same brackets as in Example \ref{ex2} with $a=b=0$.
This Lie algebra has center $Z=\mathbb{C}I_0$, and $\bar W=W/Z$ is   perfect and centerless. 
By similar arguments we see that any map in $ {\rm Cent}(\bar W)$ is a scalar map.
 Corollary \ref{mco} therefore tells us that for every
  skew-symmetric   biderivation $\delta$ of $\bar W$  there exists $\lambda\in\mathbb{C}$ such that
  $\delta(x,y) = \lambda [x,y]$, $x,y\in\bar W$. Since $W$ is perfect, it follows from Lemma \ref{mt1}  that  
  every
  skew-symmetric   biderivation $\delta$ of $W$ is of the form   $\delta(x,y) = \lambda [x,y]$, $x,y\in W$, for some 
$\lambda\in\mathbb{C}$.
 %From Theorem \ref{mt}, then for any   skew-symmetric   biderivation $\delta$ of $W(a, b)$, there is $\lambda\in \mathbb{C}$ depending on $\delta$ so that  $\delta(x,y) = \lambda [x,y]$ for all $x,y\in W(a, b)$.  
\qed
\end{example}

In the next example we will derive the usual conclusion that all skew-symmetric biderivations of the Lie algebra $L$ in question are of the form $\delta(x,y)=\lambda[x,y]$ with $\lambda\in \mathbb C$. It is interesting, however, that this will be derived from the description of skew-symmetric biderivations of $\bar L =L/Z$ which is more involved (that is,
the centroid of $\bar L$  contains more than just scalar maps).

\begin{example}\label{ex2''}  Consider the  complex Lie algebra  $$\tilde W(0,-1)={\rm span}_{\mathbb{C}}\{L_m, I_m, c_1, c_2, c_3\,|\,m\in \mathbb{Z}\}$$ 
equipped with the following brackets:
\begin{align*} [L_m, L_n]=&(n-m)L_{m+n}+\delta_{m, -n}\frac{m^3-m}{12}c_1, \\
[L_m,I_n]=&(n-m)I_{m+n}+\delta_{m, -n}\frac{m^3-m}{12}c_2,  \\  
[I_m,I_n]=&\delta_{m, -n}\frac{m^3-m}{12}c_3,\end{align*}
and $c_i$ being central elements. In fact, the center $Z$ of $\tilde W(0,-1)$ is equal to 
 $\rm{span}_{\mathbb{C}}\{c_1, c_2, c_3\}$. The Lie algebra $\tilde W(0,-1)/Z= W(0,-1)$ is defined using the same brackets as in Example \ref{ex2} with $(a, b)=(0,-1)$.
The Lie algebra $W(0, -1)$ is perfect and centerless. 
Let $L=\oplus _{i\in\mathbb{Z}}\mathbb{C}L_i\subset W(0,-1)$ and $I=\oplus _{i\in\mathbb{Z}}\mathbb{C}I_i\subset W(0,-1)$. Then $I$ is an ideal of $W(0, -1)$ and $L$ is a subalgebra.  As $L$-modules, $L$ and $I$ are isomorphic.

Let $\gamma \in {\rm Cent}(W(0,-1))$. Then $\gamma (I)\subset \gamma ([L, I])=[\gamma (L), I]\subset I$. 
There are $a, b, c\in \mathbb{C}$ such that $\gamma (L_m)=aL_m+bI_m, \gamma (I_m)=cI_m$. Since  $$\gamma ([L_m, I_n])=[\gamma (L_m), I_n]=[L_m,\gamma (I_n)],$$  we deduce that $c=a$.
We denote this element in ${\rm Cent}(W(0, -1))$  by $\gamma _{a,b}$. So ${\rm Cent}(W(0,-1))=\{\gamma _{a,b}\,|\,a,b\in\mathbb{C}\}$. From  Corollary \ref{mco} we see that for any   skew-symmetric   biderivation $\delta$ of $W(0, -1)$, there are $a, b\in \mathbb{C}$ such that  $$\delta(x,y) = \gamma _{a,b}( [x,y]), \forall x,y\in W(0, -1).$$ 

It is easy to see that there is  $g_a\in {\rm Cent}(\tilde W(0,-1))$ such that $\bar g_a=\gamma _{a,0}$ (actually $g_a$ is the scalar map determined  by $a$).

Next suppose there exists a  skew-symmetric biderivation $h$ on $\tilde W(0,-1)$ such that $$\bar h(\bar x, \bar y)=\gamma _{0, b}([\bar x, \bar y]).$$ Then $h(L, I)\subset Z$, and further
$$h(L, I)=h(L', I)=[h(L, I), L]+[L, h(L, I)]=0.$$ We may assume that 
$$h(L_m, L_n)=(n-m)bI_{m+n}+C_{m,n}, {\rm{\text{ where }}} C_{m,n}\in Z.$$
From $$\aligned &(n-m)\left((r-m-n)bI_{m +n+r}+C_{m+n, r}\right)=h([L_m ,L_n], L_r) \\
=&[h(L_m, L_r), L_n]+[L_m, h(L_n, L_r)]\\
=&(r-m)b((n-r-m)I_{m+n+r}+\delta_{m+n, -r}\frac{n-n^3}{12}c_2)\\
&+(r-n)b((n+r-m)I_{m+n+r}+\delta_{m+n, -r}\frac{m^3-m}{12}c_2),
\endaligned$$
we obtain that 
$$(n-m) C_{m+n, r} 
=(r-m)b\delta_{m+n, -r}\frac{n-n^3}{12}c_2
+(r-n)b\delta_{m+n, -r}\frac{m^3-m}{12}c_2.$$
Letting $n=-m-r$ we have
$$-(r+2m) C_{-r, r} 
=\big((r-m)((m+r)^3-m-r)
-(2r+m)(m^3-m))\big)bc_2/12.$$
Note that $C_{-r, r} =0$. Since $m$ is arbitrary, it follows that 
$b=0$. Thus, if $b\ne 0$, there is no  biderivation $h$ on $\tilde W(0,-1)$ such that $\bar h(\bar x, \bar y)=\gamma _{0, b}([\bar x, \bar y])$.

By Lemma \ref{mt1}, every   skew-symmetric   biderivation $\delta$ of $\tilde W(0,-1)$
is of the form  $\delta(x,y) = \lambda [x,y]$, $x,y\in \tilde W(0,-1)$, for some
 $\lambda\in \mathbb{C}$. 

This and the previous two examples recover all  results in  \cite{Hanw}. \qed
\end{example}

\begin{example}\label{ex3}  The Schr\"odinger-Virasoro Lie algebra $S$ is the infinite-dimensional Lie
algebra with $\mathbb{C}$-basis $\{L_m,\,Y_p\,,\,M_n\,|\,m,\,n, p\in
\mathbb{Z} \}$ and Lie brackets,
$$\aligned
&[L_m,\,L_{n}]=(n-m)L_{n+m} ,
\\
&[L_m,\,Y_p]=(p-\frac{m}{2})Y_{p+m}, \\[2pt]
& [L_m,\,M_n]=nM_{n+m}, \\[3pt]
&[Y_p,\,Y_{n}]=(n-p)M_{n+p}\,, \\[3pt]
&[Y_p,\,M_n]=[M_n,\,M_{m}]=0. \endaligned$$
We know that $S$ is  perfect with center $Z=\mathbb{C} M_0$, and that  $\bar S=S/Z$ is perfect and centerless. Let  $L=\oplus _{i\in\mathbb{Z}}\mathbb{C}L_i$, $Y=\oplus _{i\in\mathbb{Z}}\mathbb{C}Y_i$  and $M=\oplus _{i\in\mathbb{Z}\setminus\{0\}}\mathbb{C}M_i$. 
 
 Let $\gamma \in {\rm Cent}(\bar S)$. Since  $L$, $M$, $Y$  are not isomorphic to each other as indecomposable $L$-modules, 
there are $a, b, c\in \mathbb{C}$ such that $$\gamma (L_m)=aL_m,\,\,\, \gamma (M_m)=bM_m,\,\,\, \gamma (Y_m)=bY_m.$$ From $$\gamma ([L_m, M_n])=[\gamma (L_m), M_n]=[L_m, \gamma (M_n)],$$ 
$$\gamma ([L_m, Y_n])=[\gamma (L_m), Y_n]=[L_m, \gamma (Y_n)],$$   we deduce that $a=b=c$. Thus $\gamma$ is a scalar map.  
By Corollary \ref{mco}, for any   skew-symmetric   biderivation $\delta$ of $\bar S$, there is $\lambda\in \mathbb{C}$ such that  $\delta(x,y) = \lambda [x,y]$ for all $x,y\in \bar S$.  Since $S$ is perfect, Lemma \ref{mt1} implies that every   skew-symmetric   biderivation $\delta$ 
is of the form $\delta(x,y) = \lambda [x,y]$, $x,y\in S$, for some 
 $\lambda\in \mathbb{C}$.

This example recovers all results in 
\cite{WD1}. \qed
\end{example}

\begin{example}\label{ex4}   Let $q\in\mathbb{C}$. 
The Block Lie algebra $B(q)$ is the Lie algebra with a basis $\{L_{m,i}\,|\, m, i\in \mathbb{Z}\}$  subject to the following Lie brackets
$$
[L_{m,i},L_{n,j}]=(n(i+q)-m(j+q))L_{m+n,i+j}, \ \forall\ i,j,m,n\in\mathbb{Z}.$$
Some of these Lie algebras are not perfect or centerless. Anyway $B(q)'/Z$ is a simple Lie algebra. So for any  skew-symmetric   biderivation $\delta$ of $B(q)'/Z$, there is $\lambda\in \mathbb{C}$ such that  $\delta(x,y) = \lambda [x,y]$ for all $x,y\in B(q)'/Z$.  

Now we use   Lemma \ref{mt1}. If $q\ne0$, for any  skew-symmetric   biderivation $\delta$ of $B(q)'$, there is $\lambda\in \mathbb{C}$ such that  $\delta(x,y) = \lambda [x,y]$ for all $x,y\in B(q)'$. 
Using  Lemma \ref{perfect}, if $q\ne0$, there is no special biderivations, hence,  for any  skew-symmetric   biderivation $\delta$ of $B(q)$ there is $\lambda\in \mathbb{C}$ such that  $\delta(x,y) = \lambda [x,y]$ for all $x,y\in B(q)$. 

If $q=0$, we know that $B(0)=B(0)'\oplus\mathbb{C}L_{0,0}$ as ideals. It is easy to see that for any  skew-symmetric   biderivation $\delta$ of $B(0)$ there is $\lambda\in \mathbb{C}$ such that  $\delta(x,y) = \lambda [x,y]$ for all $x,y\in B(0)$.

This example recovers the results  on  skew-symmetric   biderivations  in 
\cite{GLZ}. 
 \qed
\end{example}

We have thus seen that our methods cover a variety of results from the literature. On the other hand, obviously they do not work  for  Lie algebras $L$ such that the sequence (\ref{quo1}) does not terminate.

% \section{Symmetric   biderivations}
 
We close the section by discussing general, not necessarily skew-symmetric, biderivations. By a {\em biderivation} we mean, of course,  a bilinear map $\delta:L\times L\to M$
such that the maps $x\mapsto \delta(x,z)$ and $x\mapsto \delta(z,x)$ are derivations for every $z\in L$.
Biderivations that are skew-symmetric seem to be more natural than others, in particular because of their connection to the centroid and commuting linear maps. 
%The fact that commuting linear maps give rise to  biderivations that are skew-symmetric indeed gives a good excuse for studying only these ones;
 However,  
 general biderivations are also
 mathematically challenging and  therefore   deserve some attention.

Every biderivation $\delta$ can be written as the sum of a symmetric biderivation and a skew-symmetric biderivation, namely,
$$\delta(x,y) =\frac{\delta(x,y) + \delta(y,x)}{2} +  \frac{\delta(x,y) - \delta(y,x)}{2}.$$
We can therefore focus on symmetric biderivations. Finding examples on  Abelian Lie algebras $L$ is trivial: every symmetric bilinear map from $L\times L$ to $L$ is a symmetric biderivation. 
  %A simple example can be obtained as follows. Take a derivation $d$ on a commutative associative  algebra $A$. Then $\delta(x,y)=d(x)d(y)$ is a symmetric biderivation from $A\times A$ to $A$. Thus, it is easy to find nonzero symmetric biderivations on Abelian Lie algebras.
 On the other hand, in a Lie (or associative) algebra with trivial (or small) center, one normally
expects that a symmetric biderivation is always $0$. In view of Theorem \ref{mt}, it is tempting to conjecture that there are no nonzero symmetric biderivations  from $L\times L$ to $M$ if $L$ is perfect and $Z_M(L)=\{0\}$. However, the following example (which admittedly came as a surprise to us)  shows that this is not the case -- not even when $L=\mathfrak{sl}_2$!

\begin{example}  For convenience we take a basis $d_1, d_0, d_{-1}$ of the Lie algebra  $\mathfrak{sl}_2$ subject to the brackets
 $$[d_i,d_j]=(j-i)d_{i+j}, \forall i,j\in\{0, 1, -1\},$$
 where $d_k=0$ if $k\notin\{0,1,-1\}$.
  For $a, b\in\mathbb{C}$, let $M(a,b)=\oplus_{i\in\mathbb{Z}}\mathbb{C}v_i$ be the $\mathfrak{sl}_2$-module given by
 $$d_i\cdot v_j=(j+a+bi)v_{i+j}.$$
Note that $M(a,b)$ is a simple $\mathfrak{sl}_2$-module if  and only if $a\notin  \mathbb{Z}$ or $b\notin \mathbb{Z}$. Also,  $Z_{M(a,0)}(\mathfrak{sl}_2)={\rm{span}}\{v_{-a}\}$ if $a\in\mathbb{Z}$, and otherwise $Z_{M(a,0)}(\mathfrak{sl}_2)=\{0\}$.

Take an arbitrary $k\in \mathbb{Z}$. It is easy to see that  the  bilinear map $\delta_k: \mathfrak{sl}_2 \times \mathfrak{sl}_2\rightarrow M(a,0)$  determined by
\begin{eqnarray*}
 \delta_k(d_m,d_n)= v_{m+n+k}, \forall i,j\in\{0, 1, -1\},
\end{eqnarray*}
is a symmetric biderivation.
%We see that $\delta_k(\mathfrak{sl}_2, \mathfrak{sl}_2)=\{v_{k}, v_{k\pm1}, v_{k\pm2}\}$  which is not a submodule of $M(a, 0)$.
Similarly,
for any $k\in \mathbb{Z}$,   $\delta'_k: \mathfrak{sl}_2 \times \mathfrak{sl}_2\rightarrow M(a,1)$  determined by 
\begin{eqnarray*}
 \delta'_k(d_m,d_n)= (m+n+k+a)v_{m+n+k}, \forall i,j\in\{0, 1, -1\}
\end{eqnarray*}
is a symmetric biderivation. \qed
%We see that  
%$\delta_k(\mathfrak{sl}_2, \mathfrak{sl}_2)=\{v_{k}, v_{k\pm1}, v_{k\pm2}\}$ (except for $a\in\mathbb{Z}$) is not submodule of $M(a, 1)$.
\end{example}

We leave as an open question whether or not there exists a simple Lie algebra $L$ (over a field of characteristic not $2$) that admits a nonzero symmetric biderivation from $L\times L$ to $L$.

\section{Commuting linear maps}

Let $M$ be a module over a Lie algebra $L$.  It is clear that every $\gamma\in{\rm Cent}(M)$ satisfies $x\cdot \gamma(x)=0$ for each $x\in L$. 
 We will show that under a mild  assumption, this condition is characteristic for the centroid., i.e., commuting linear maps $f$ from $L$ to $M$ belong to $\rm{Cent}(M)$.

\begin{lemma}\label{ll}
Let $L$ be a Lie algebra  and let $M$ be an  $L$-module.
If $f:L\to M$ is a commuting linear map, then 
$$[w,z]\cdot\Big(u\cdot \big( f([x,y]) - x\cdot f(y)\big)\Big)=0$$
for all $x,y,u,w,z\in L$.
\end{lemma}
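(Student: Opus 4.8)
The plan is to mimic the argument for Lemma \ref{bl}, but using the commuting identity $x\cdot f(x)=0$ in place of skew-symmetry of a biderivation. First I would record the polarized version of the hypothesis: linearizing $x\cdot f(x)=0$ gives $x\cdot f(y)+y\cdot f(x)=0$, so that $\delta(x,y):=x\cdot f(y)$ is a skew-symmetric biderivation (this is already noted in the introduction). Then Lemma \ref{bl} applies to this $\delta$ and yields $\delta(u,[x,y])-u\cdot\delta(x,y)\in Z_M(L')$ for all $u,x,y$, i.e.
$$ [w,z]\cdot\Big( u\cdot\big(x\cdot f(y)\big)\ -\ u\cdot\big([x,y]\cdot f(?)\big)\Big) $$
— but here one must be careful, because $\delta(u,[x,y])=u\cdot f([x,y])$ only if we read the second slot of $\delta$ correctly. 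Indeed $\delta(u,v)=u\cdot f(v)$, so $\delta(u,[x,y])=u\cdot f([x,y])$ and $u\cdot\delta(x,y)=u\cdot\big(x\cdot f(y)\big)$. Hence Lemma \ref{bl} gives directly
$$ u\cdot f([x,y])\ -\ u\cdot\big(x\cdot f(y)\big)\ \in\ Z_M(L'), $$
which is exactly the assertion $[w,z]\cdot\big(u\cdot(f([x,y])-x\cdot f(y))\big)=0$ for all $w,z\in L$.

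So the core of the proof is simply: \emph{invoke Lemma \ref{bl} for the skew-symmetric biderivation $\delta(x,y)=x\cdot f(y)$}, and then unwind the two instances of $\delta$ in the conclusion of that lemma, using $\delta(u,[x,y])=u\cdot f([x,y])$ (no biderivation computation needed here, just the definition $\delta(u,v)=u\cdot f(v)$) and $u\cdot\delta(x,y)=u\cdot(x\cdot f(y))$. The membership in $Z_M(L')$ is precisely the statement that multiplying by an arbitrary $[w,z]$ annihilates the element, which is the displayed equation. I would write this in three or four lines.

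I do not anticipate a genuine obstacle; the only point requiring a little care is bookkeeping of the arguments of $\delta$ — making sure that the ``$u$'' slot and the ``$[x,y]$'' slot in Lemma \ref{bl} are matched to $f$ correctly so that $\delta(u,[x,y])$ becomes $u\cdot f([x,y])$ rather than something with $f$ applied in the wrong place. If one prefers a self-contained argument not quoting Lemma \ref{bl}, one could instead redo the chain \eqref{q}–\eqref{dva} directly with $\delta(x,y)=x\cdot f(y)$, but that is strictly longer and unnecessary. I would therefore present the short version via Lemma \ref{bl}.
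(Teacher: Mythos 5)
Your proposal is correct and is essentially identical to the paper's own proof: both linearize $x\cdot f(x)=0$ to see that $\delta(x,y)=x\cdot f(y)$ is a skew-symmetric biderivation, apply Lemma \ref{bl}, and unwind $\delta(u,[x,y])=u\cdot f([x,y])$ and $u\cdot\delta(x,y)=u\cdot(x\cdot f(y))$. The bookkeeping you flag is handled exactly as you describe.
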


\begin{proof}
Linearizing $x\cdot f(x)=0$ we  get 
$x\cdot f(y)=- y\cdot f(x)$. This shows that 
the map $\delta:L\times L\to M$, $\delta(x,y)= x\cdot f(y)$ is a skew-symmetric  biderivation.
According to Lemma \ref{bl}, $$[w,z]\cdot\Big(\delta(u,[x,y]) - u\cdot\delta(x,y)\Big)=0$$ for all $x,y,u,w,z\in L$. Since 
$\delta(x,y)=x\cdot f(y)$ and $\delta(u,[x,y]) =  u\cdot f([x,y])$, the result follows.
\end{proof}

The following theorem follows immediately from
Lemma \ref{ll}.

\begin{theorem}\label{c1}
Let $L$ be a Lie algebra  and let $M$ be an  $L$-module such that $Z_M(L')=\{0\}$.
If  $f:L\to M$ is a commuting linear map, then $f\in {\rm Cent}(M)$.
\end{theorem}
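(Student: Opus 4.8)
The plan is to deduce Theorem \ref{c1} directly from Lemma \ref{ll}, exactly as the paper signals, so the argument is short. Let $f:L\to M$ be a commuting linear map. As observed in the preamble, linearizing $x\cdot f(x)=0$ gives $x\cdot f(y)=-y\cdot f(x)$, so $\delta(x,y)=x\cdot f(y)$ is a skew-symmetric biderivation; this is recorded in the proof of Lemma \ref{ll}. Applying Lemma \ref{ll} we obtain
$$[w,z]\cdot\Bigl(u\cdot\bigl(f([x,y])-x\cdot f(y)\bigr)\Bigr)=0\qquad\text{for all }x,y,u,w,z\in L.$$

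First I would fix $x,y\in L$ and set $m=f([x,y])-x\cdot f(y)\in M$. The displayed identity says that for every $u\in L$ the element $u\cdot m$ is annihilated by $[w,z]$ for all $w,z\in L$, i.e. $u\cdot m\in Z_M(L')$. By hypothesis $Z_M(L')=\{0\}$, hence $u\cdot m=0$ for every $u\in L$, which means $m\in Z_M(L)$. But $Z_M(L)\subseteq Z_M(L')=\{0\}$ since $L'\subseteq L$, so $m=0$.

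Therefore $f([x,y])=x\cdot f(y)$ for all $x,y\in L$, which is precisely the defining condition for $f$ to lie in ${\rm Cent}(M)$. This completes the proof. There is no real obstacle here: the only things to be careful about are the trivial set-theoretic inclusion $Z_M(L)\subseteq Z_M(L')$ (so that killing $L'\cdot(u\cdot m)$ and then $L\cdot m$ in two stages collapses $m$ to $0$) and the bookkeeping that Lemma \ref{ll} is stated with the module action written multiplicatively, so that $\delta(u,[x,y])=u\cdot f([x,y])$ matches the $\delta(u,[x,y])-u\cdot\delta(x,y)$ appearing in Lemma \ref{bl}. All the substance is already contained in Lemmas \ref{bl} and \ref{ll}; Theorem \ref{c1} is just the two-step extraction of $m=0$ from the conclusion of Lemma \ref{ll}.
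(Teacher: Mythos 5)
Your proof is correct and is exactly the argument the paper intends when it says Theorem \ref{c1} "follows immediately from Lemma \ref{ll}": the two-step extraction ($u\cdot m\in Z_M(L')=\{0\}$ for all $u$, hence $m\in Z_M(L)\subseteq Z_M(L')=\{0\}$) is the right way to fill in the details, and the inclusion $Z_M(L)\subseteq Z_M(L')$ you invoke is the correct (trivial) one since $L'\subseteq L$.
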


In the special case where $M=L$, Theorem \ref{c1} gets the following form.

\begin{corollary}\label{c2}
Let $L$ be a Lie algebra such  that $Z_L(L')=\{0\}$. Then every commuting linear map
$f:L\to L$ belongs to ${\rm Cent}(L)$.
\end{corollary}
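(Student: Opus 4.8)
The plan is to derive Corollary \ref{c2} as the immediate specialization of Theorem \ref{c1} to the case $M = L$ with the adjoint action $x \cdot y = [x,y]$. Under this identification, the module-theoretic centroid ${\rm Cent}(M)$ becomes the usual centroid ${\rm Cent}(L)$, since a linear map $\gamma : L \to L$ satisfies $\gamma([x,y]) = x \cdot \gamma(y) = [x, \gamma(y)]$ for all $x,y$ precisely when it is the usual centroid condition. Likewise, the commuting condition $x \cdot f(x) = 0$ becomes $[x, f(x)] = 0$ for all $x \in L$, which is exactly what it means for $f$ to be a commuting linear map on $L$ in the classical sense.

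The one point that requires a brief verification is that the hypothesis of Theorem \ref{c1}, namely $Z_M(L') = \{0\}$, translates correctly. By definition, $Z_M(L') = \{ v \in M \mid L' \cdot v = \{0\}\}$, and with $M = L$ under the adjoint action this is $\{ c \in L \mid [L', c] = \{0\}\} = Z_L(L')$. Thus the assumption $Z_L(L') = \{0\}$ in Corollary \ref{c2} is literally the assumption $Z_M(L') = \{0\}$ of Theorem \ref{c1}. Once this dictionary is in place, Theorem \ref{c1} yields $f \in {\rm Cent}(M) = {\rm Cent}(L)$, which is the desired conclusion. There is essentially no obstacle here: the corollary is a pure reformulation, and the proof is a single sentence invoking Theorem \ref{c1} after noting that $L$ is an $L$-module via the adjoint action and that all the relevant objects specialize as described. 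If one wanted to make the paper self-contained at this spot, one could also note that the abstract claim in the abstract (under the milder hypothesis ``$[c, [L,L]] = \{0\}$ implies $c = 0$'') is verbatim the condition $Z_L(L') = \{0\}$, so Corollary \ref{c2} is exactly the promised result.

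\begin{proof}
Regard $L$ as a module over itself via the adjoint action $x \cdot y = [x,y]$. Then $Z_L(L') = Z_M(L')$ with $M = L$, so the hypothesis $Z_L(L') = \{0\}$ is precisely $Z_M(L') = \{0\}$. A commuting linear map $f : L \to L$ satisfies $[x, f(x)] = x \cdot f(x) = 0$ for all $x \in L$, so $f$ is a commuting linear map in the sense of Theorem \ref{c1}. That theorem gives $f \in {\rm Cent}(M)$, and for $M = L$ the space ${\rm Cent}(M)$ coincides with the usual centroid ${\rm Cent}(L)$, since $\gamma([x,y]) = x \cdot \gamma(y)$ reads $\gamma([x,y]) = [x, \gamma(y)]$. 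Hence $f \in {\rm Cent}(L)$.
\end{proof}
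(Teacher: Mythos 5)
Your proof is correct and is exactly the paper's approach: the paper states Corollary \ref{c2} as the immediate specialization of Theorem \ref{c1} to $M=L$ with the adjoint action, giving no further argument. Your careful verification that $Z_M(L')$, the commuting condition, and ${\rm Cent}(M)$ all specialize correctly is precisely the (routine) content being left implicit.
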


We remark that the set of commuting linear maps of Lie algebras $L$ was  studied in \cite{LL} under the name  of {\em quasi-centroid}. It was shown, in particular,  that the quasi-centroid of $L$ coincides with the centroid   in case $L$ is finite dimensional, centerless, and perfect \cite[Theorem 5.28]{LL}.
Corollary \ref{c2} is obviously considerably stronger. In particular, it shows that the assumption that $L$ is finite dimensional is superfluous.

If $L$ has a nontrivial center, then every map with the range in $Z$ is commuting. Moreover, it does not lie in Cent$(L)$ in case it does not vanish on $L'$.  This justifies  that  Corollary \ref{c2} deals with  centerless Lie algebras. However, we have assumed more than that.  To justify the assumption that $L'$ has trivial centralizer in $L$, consider the following example. It is a modification of an example from \cite{Cheung}. 

\begin{example}\label{3.4}
Let $L$ be the Lie algebra consisting of all $4\times 4$ matrices of the form
 $$\left[\begin{matrix} x_{11} & x_{12} &  x_{13} & x_{14}\cr  
0& x_{11} &  0 & x_{24}\cr
0 & 0 &  0 & x_{34}\cr
0 & 0 &  0 & 0\cr
\end{matrix} \right]$$
with $x_{ij}\in F$. One easily checks that $L$ is centerless (but $L'$ is not), and that  $f:L\to L$ given by
 $$\left[\begin{matrix} x_{11} & x_{12} &  x_{13} & x_{14}\cr  
0& x_{11} &  0 & x_{24}\cr
0 & 0 &  0 & x_{34}\cr
0 & 0 &  0 & 0\cr
\end{matrix} \right]\mapsto 
\left[\begin{matrix} 0 & x_{13} &  0 & 0\cr  
0& 0&  0 & 0\cr
0 & 0 &  0 & x_{24}\cr
0 & 0 &  0 & 0\cr
\end{matrix} \right]
$$
is a linear commuting map. However, $f\notin {\rm Cent}(L)$ since, for example, $$0=f([e_{13},e_{24}]) \neq  [e_{13}, f(e_{24})] = e_{14}.$$
Here, $e_{ij}$ are matrix units.\qed
\end{example}

As we mentioned above, 
%If $L$ has a nonzero center $Z$, then we have another type of commuting linear maps. Namely, 
every {\em central} linear map, i.e., a map with the range in the center $Z$ of $L$, is trivially commuting.  Note also that the sum of commuting maps is again commuting. Thus, if $\gamma\in{\rm Cent}(L)$ and $\mu$ is a central map, then $f=\gamma + \mu$ is commuting.

 The next simple lemma connects the problem of describing skew-symmetric biderivations with the problem of describing commuting linear maps.

\begin{lemma}\label{clm} Let $L$ be a Lie algebra. If every skew-symmetric biderivation $\delta$ on L is of the form $\delta(x,y)= \gamma([x,y])$, then every commuting linear map f on L is of the form $ f = \gamma + \mu$,
where $\gamma\in {\rm Cent}(L)$  and $\mu$ is a central linear map. \end{lemma}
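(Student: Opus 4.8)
The plan is to start from a commuting linear map $f$ on $L$ and use the hypothesis to pin down a candidate element $\gamma\in{\rm Cent}(L)$, then show the difference $f-\gamma$ has range in $Z$. First I would invoke the hypothesis: by assumption the skew-symmetric biderivation $\delta(x,y)=[x,f(y)]$ (which is a biderivation because $x\cdot f(x)=0$ linearizes to $[x,f(y)]=-[y,f(x)]$, as noted in the introduction) equals $\gamma([x,y])$ for some $\gamma\in{\rm Cent}(L)$. So
$$[x,f(y)]=\gamma([x,y])=[x,\gamma(y)]\quad\text{for all }x,y\in L,$$
the last equality because $\gamma\in{\rm Cent}(L)$. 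Hence $[x,f(y)-\gamma(y)]=0$ for all $x,y\in L$, i.e.\ $f(y)-\gamma(y)\in Z_L(L)=Z$ for every $y$. Setting $\mu=f-\gamma$, this says exactly that $\mu$ is a central linear map, so $f=\gamma+\mu$ as claimed.

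The steps in order: (1) recall that $\delta(x,y)=x\cdot f(y)$ is a skew-symmetric biderivation on $L$, a fact already established in the introduction; (2) apply the hypothesis to get $\gamma\in{\rm Cent}(L)$ with $\delta(x,y)=\gamma([x,y])$; (3) rewrite $\gamma([x,y])=[x,\gamma(y)]$ using the defining property of the centroid; (4) subtract to conclude $[x,f(y)-\gamma(y)]=0$ for all $x$, hence $f(y)-\gamma(y)\in Z$; (5) set $\mu=f-\gamma$ and observe $\mu$ is linear with range in $Z$, completing the proof.

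There is essentially no obstacle here — the lemma is a direct bookkeeping consequence of the correspondence between commuting maps and skew-symmetric biderivations together with the centroid identity. The only minor point worth stating carefully is step (3): the identity $\gamma([x,y])=[x,\gamma(y)]$ is precisely the definition of $\gamma$ lying in ${\rm Cent}(L)$ (the centroid of the $L$-module $L$), and one must apply it with the first slot $x$, not the second, to match the form $[x,f(y)]$ coming from $\delta$. After that, the passage from "$[x,\mu(y)]=0$ for all $x,y$" to "$\mu(y)\in Z$ for all $y$" is immediate from the definition $Z=Z_L(L)=\{v\mid [L,v]=\{0\}\}$, and linearity of $\mu=f-\gamma$ is automatic since both $f$ and $\gamma$ are linear.
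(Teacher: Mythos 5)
Your proof is correct and follows essentially the same route as the paper: the paper forms the skew-symmetric biderivation $\delta(x,y)=[f(x),y]$ (which equals your $[x,f(y)]$ by the linearized commuting identity) and then states that the conclusion "readily" follows, which is exactly the bookkeeping you carry out in steps (2)--(5). No issues.
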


\begin{proof}
If $f:L\to L$ is a commuting linear map, then 
$[f(x),y]= -[f(y),x]$ for all $x,y\in L$, and hence  $\delta(x,y)=[f(x),y]$ is a skew-symmetric biderivation. This readily implies the conclusion of the lemma.
\end{proof}

Using this lemma together with the description of biderivations on various Lie algebras obtained in  the preceding section, we can now also describe commuting linear maps on all these Lie algebras.  

\begin{example}  % If we apply this lemma to all the Lie algebras in Examples \ref{ex2} to \ref{ex4} in Sect. 2,   we obtain that e
Every commuting linear map $f$ on $W(a, b)$ for $a, b\in \mathbb{C}$, $\tilde {W}(0, -1)$, $S$, $\bar S$ and $B(q)$ for $q\in\mathbb{C}$,  is of the form $ f = \gamma + \mu$,
where $\gamma$ lies in the centroid and $\mu$ is a central map. \qed
\end{example}

We now give an example of 
a commuting linear map that is not a sum of a  map in ${\rm Cent}(L)$ and a central map.
It is particularly interesting that this map is a derivation.

\begin{example} \label{irre} Let $L$ be the Lie algebra from Example \ref{special}. We know that  the derivation  $f(x)=[a,x]$ 
 is a commuting linear map. Suppose $f$ is a sum of a   map in ${\rm Cent}(L)$ and a central map. Then  $$[a,[x,y]]=[x,[a,y]] \mod Z, \forall x,y\in L,$$ and hence $[[a,x],y]\in Z$ for all $x,y\in L$. However, 
%if there are $x,y\in L$ so that $[[a,x],y]\notin Z$, then the commuting linear map $f$ is not a sum of a  central map and map in ${\rm Cent}(L)$.
from Example \ref{special} we know that this is not true; specifically, this follows from $[[\bar x_1, \bar x_2], \bar x_3]\notin Z$.
Thus, $f$ is not a sum of  a  map in ${\rm Cent}(L)$ and a central map.
\qed
\end{example}

Finally, we will propose an algorithm for computing  commuting linear map of a Lie algebra without using biderivations. 
Although we are  interested in commuting linear maps from $L$ to $L$, this algorithm involves commuting linear maps from $L$ to an $L$-module $M$ (and hence gives one of justifications for working in a more general framework involving modules). 
In view of  Theorem \ref{c1} and Corollary \ref{c2}, we are  now interested in the case where $Z_M(L')\ne0$. 

 Any linear map $f:L\to Z_M(L)$ is   a commuting map. We  call it a
{\em central map}.
 A commuting  linear map $f:L\to M$ will be  called  a 
{\em special commuting linear map} if  
$f(L')=0$ and  $f(L)\subset Z_M(L')$. %If we write $Z_M(L')=Z_1\oplus Z_M(L)$, we may further assume that $f(L)\subset Z_1$ after subtracting  a central map from $f$.%Nonzero special commuting linear maps are not central.

For any commuting linear map $f: L\to M$ we can define another commuting linear map $\tilde  f: L\to M/Z_M(L')$ by $$\tilde f(x)= f(x)+Z_M(L').$$
The following lemma then holds.

\begin{corollary}\label{mt1'}
Let $L$ be a   Lie algebra and $M$ be an $L$-module.
Then  $f\to  \tilde f$ is,  up to sums of special commuting and central linear maps, a 1-1 map from 
commuting linear maps from $L$ to $M$ to commuting linear maps from $L$ to $M/Z_M(L')$.
\end{corollary}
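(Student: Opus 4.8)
The plan is to mimic the structure of the proof of Lemma~\ref{mt1} for biderivations, replacing ``trivial biderivation'' by ``sum of a special commuting and a central linear map.'' First I would verify that the assignment $f \mapsto \tilde f$ is well-defined, which is immediate since $Z_M(L')$ is an $L$-submodule of $M$ (if $S\cdot v = \{0\}$ and $x\in L$, then $S\cdot(x\cdot v) \subseteq$ the span of $(SxS)\cdot v$ and $(xS)\cdot v$ after applying the module action of the Lie bracket, so $Z_M(L')$ is stable under the action; hence the quotient module $M/Z_M(L')$ makes sense and $\tilde f$ is again commuting because $x\cdot \tilde f(x) = \overline{x\cdot f(x)} = 0$).

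The core is injectivity modulo the stated class. Suppose $f_1, f_2 : L\to M$ are commuting linear maps with $\tilde f_1 = \tilde f_2$. Put $f = f_1 - f_2$; this is commuting and satisfies $f(L)\subseteq Z_M(L')$. I must show $f$ decomposes as (special commuting) $+$ (central). The condition $f(L)\subseteq Z_M(L')$ is half of what ``special'' requires; the missing part is $f(L')=0$. So consider the restriction of $f$ to $L'$: for $x,y\in L$ we have, by the commuting identity in linearized form, $[x,y]\cdot f(z) = -z\cdot f([x,y])$ for all $z$; since $f([x,y])\in Z_M(L')$ and $[x,y]\in L'$, the left side $[x,y]\cdot f(z)$ equals $0$. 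Hence $L'\cdot f(L) = \{0\}$; equivalently $f(L)\subseteq Z_M(L')$ already gives this, and additionally we learn $f([x,y]) \in Z_M(L)$? Let me instead argue: for $u\in L'$ and arbitrary $v\in L$, $u\cdot f(v) = 0$ (from $f(v)\in Z_M(L')$), and $v\cdot f(u) = -u\cdot f(v) = 0$ for all $v\in L$, so $f(u)\in Z_M(L)$. Thus $f(L')\subseteq Z_M(L)$. Now choose a subspace complement $C$ of $L'$ in $L$, write $f = f|_C \oplus f|_{L'}$ extended by $0$ appropriately: define $\mu$ to be $f$ on $L'$ and $0$ on $C$, and $g = f - \mu$, which is $0$ on $L'$ and equals $f$ on $C$. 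Then $\mu$ has range in $Z_M(L)$, so $\mu$ is a central map; it remains to check $g$ is commuting and special. Specialness of $g$: $g(L')=0$ by construction, and $g(L)\subseteq f(L) + \mu(L)\subseteq Z_M(L')$. That $g$ is commuting: for $x\in L$ write $x = c + u$ with $c\in C$, $u\in L'$; then $x\cdot g(x) = x\cdot f(c) = (c+u)\cdot f(c)$, and $u\cdot f(c) = 0$ since $f(c)\in Z_M(L')$ and $u\in L'$, while $c\cdot f(c)$ need not vanish --- so I should instead get commutativity of $g$ from the fact that $g = f - \mu$ and both $f$ and $\mu$ are commuting only if their difference is, which fails in general. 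The cleaner route: $g$ is commuting iff $x\cdot g(x)=0$, and $x\cdot g(x) = x\cdot f(x) - x\cdot \mu(x) = 0 - x\cdot f(u)$ where $u$ is the $L'$-component of $x$; but $x\cdot f(u) = -u\cdot f(x)$ and $u\in L'$, $f(x)\in Z_M(L')$ force this to be $0$. Hence $g$ is commuting, and $f = g + \mu$ is the desired decomposition.

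For surjectivity up to that class, I would note that any commuting linear map $\bar h : L\to M/Z_M(L')$ lifts to \emph{some} linear map $h_0 : L\to M$ (choosing a linear section of $M\to M/Z_M(L')$); the obstruction to $h_0$ being commuting is that $x\cdot h_0(x)\in Z_M(L')$ rather than $0$, which is exactly the freedom absorbed by the special/central correction terms in the statement, so the fiber over $\bar h$ is a coset of the space of such corrections and is nonempty. (Strictly, since the statement only claims $f\mapsto\tilde f$ is $1$-$1$ modulo that class, surjectivity is not required, but it is harmless to observe.)

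The main obstacle I anticipate is the bookkeeping in the decomposition step --- specifically, isolating the ``special'' part cleanly, since being special requires \emph{both} $g(L')=0$ and $g(L)\subseteq Z_M(L')$ simultaneously, and one must be careful that subtracting off the central part $\mu$ (supported on $L'$) does not destroy the commuting property of the remainder. The identity $x\cdot f(y) = -y\cdot f(x)$ together with $f(L')\subseteq Z_M(L)$ (derived above) is what makes this work, and verifying $f(L')\subseteq Z_M(L)$ --- not merely $f(L')\subseteq Z_M(L')$ --- is the one non-formal point. Everything else is a transcription of the argument already used for Lemma~\ref{mt1}.
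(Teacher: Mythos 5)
Your proof of the actual claim (injectivity modulo special plus central maps) is correct and follows essentially the same route as the paper: from $f(L)\subseteq Z_M(L')$ one gets $L'\cdot f(L)=\{0\}$, hence $f(L')\subseteq Z_M(L)$ via the linearized identity $x\cdot f(y)=-y\cdot f(x)$, and one then subtracts the central map supported on $L'$; your explicit check that the remaining piece $g$ is still a commuting map is a detail the paper leaves implicit but is worth having. One caveat: your parenthetical surjectivity argument is not valid --- a commuting map into $M/Z_M(L')$ need not lift to a commuting map into $M$, and "the fiber is a coset, hence nonempty" does not follow (compare the failure of lifting for biderivations in Example \ref{ex2''}); since the corollary only asserts that $f\mapsto\tilde f$ is one-to-one up to the stated maps, this aside does not affect the correctness of your proof, but it should be deleted or weakened.
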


\begin{proof} Let $f_1, f_2$ be commuting linear maps on $L$  such that  $\tilde f_1=\tilde  f_2$. Let  $f=f_1-f_2$. Then $f( L)\subset Z_M(L')$. From $0=L'\cdot f(L)=-L\cdot f(L')$, we see that $f(L')\subset Z_M(L)$. By subtracting a central linear map from $f$ we may assume that $f(L')=0$.   Now this $f: L\to M$ is a special commuting linear map.\end{proof}

For an $L$-module $M$ we have a sequence of quotient modules:
\begin{equation}\label{quo} M=M_1, M_2=M_1/Z_{M_1}(L'), \cdots,  M_{r}=M_{r-1}/Z_{M_{r-1}}(L').\end{equation}
If there is  $r\in\mathbb{N}$ such that $Z_{M_r}(L')=0$, we can start to find commuting maps $f_r: L\to M_r$. Then repeatedly using Corollary \ref{mt1'} we obtain all commuting maps  $f: L\to M$. Let us point out that   $Z_{M_r}(L')=0$ implies, by Theorem \ref{c1}, that every commuting map $f_r: L\to M_r$ lies in Cent$(M_r)$.

We conclude the paper by an example illustrating this algorithm.

\begin{example} \label{g} Let $\mathfrak{g}$ be a finite dimensional simple Lie algebra over $\mathbb{C}$,  $\mathbb{C}[t]$ be the polynomial algebra in $t$, and $n>1$ be an integer. Consider the Lie algebra 
$$L=\mathfrak{g}\otimes(t\mathbb{C}[t]/t^{2n+1}\mathbb{C}[t])=\oplus_{k=1}^{2n}(\mathfrak{g}\otimes t^k).$$
Note that $L$ is a nilpotent Lie algebra. 
As in (\ref{quo}), we have
$$\aligned L_1=L,\,\,\,& L_2\simeq \mathfrak{g}\otimes(t\mathbb{C}[t]/t^{2n-1}\mathbb{C}[t]),\,\,\, L_3\simeq \mathfrak{g}\otimes(t\mathbb{C}[t]/t^{2n-3}\mathbb{C}[t]),\\
 & \cdots, \,\,\,L_n\simeq \mathfrak{g}\otimes(t\mathbb{C}[t]/t^{3}\mathbb{C}[t]),\,\,\, L_{n+1}\simeq \mathfrak{g}\otimes(t\mathbb{C}[t]/t^{2}\mathbb{C}[t]).\endaligned$$ 

{\bf Step 1}. Find all commuting maps to $L_{n}$.

For any commuting linear map $f_n:L\to L_{n+1}$ which can be any linear map, let $g_n: L\to L_{n}$ be a commuting map such that $\tilde g_n=f_n$. We may assume that $g_n(\mathfrak{g}\otimes t)\subset \mathfrak{g}\otimes t$ up to a central map since $\mathfrak{g}\otimes t^2\subset Z_{L_{n}}(L')$. Let $g_n(x\otimes t)=h(x)\otimes t$. Then $h: \mathfrak{g}\to \mathfrak{g}$ is a commuting map on $\mathfrak{g}$ which has to be a scalar map by Corollary \ref{c2}. From this one can deduce that 
$g_n(\mathfrak{g}\otimes t^2\mathbb{C}[t])\subset \mathfrak{g}\otimes t^2$.

It is clear that any special commuting map is central. Thus every commuting map on $L_{n}$ is a sum of a scalar map (induced) and a central map.

{\bf Step 2}. Find all commuting maps on $L_{n-1}$.

For any commuting map $f_{n-1}:L\to L_{n}$ with
$$f_{n-1}(x\otimes t+y\otimes t^2)=ax\otimes t+(h_1(x)+h_2(y))\otimes t^2,$$
where $a\in\mathbb{C}$ and $ h_1, h_2$ are linear maps on $\mathfrak{g}$, let $g_{n-1}: L\to L_{n-1}$ be a commuting map such that $\tilde g_{n-1}=f_{n-1}$. Up to a central map  we may assume that 
$$g_{n-1}(x\otimes t)=ax\otimes t+h_1(x)\otimes t^2,\,\,\,
g_{n-1}(x\otimes t^2)=h_2(x)\otimes t^2.
$$
Using similar arguments as in Step 1 we deduce that  $h_1(x)=bx$ for some $b\in\mathbb{C}$ and $h_2(x)=ax$.
From this one can deduce that 
$g_{n-1}(\mathfrak{g}\otimes t^3\mathbb{C}[t])\subset \mathfrak{g}\otimes t^3$.

In this case it is also  clear that any special commuting map is central. Thus every commuting map  on $L_{n-1}$ is a sum of   a central map and a map $g_{n-1}$.

Continuing in this manner we deduce that, up to a central map,  every commuting map $f: L\to L$ is of the form
 $$f(x_k\otimes t^k)=x_k\otimes \sum_{j=k}^{2n}a_{j-k+1}t^j,\forall x_k\in \mathfrak{g},$$
 where $a_k\in\mathbb{C}.$ One can further see that $f$ lies in the centroid of $L$.
\qed
\end{example}

 We have   seen from the above examples that our methods cover a variety of results from the literature. Theoretically, one can repeatedly use Corollaries \ref{c2} and \ref{mt1'} to find all commuting linear maps on various Lie algebras.

\end{document}